\documentclass[1p]{elsarticle}
\usepackage{amssymb}
\usepackage{amsfonts}
\usepackage{amsmath}

\usepackage[polish, english]{babel}
\usepackage{polski}
\usepackage{tikz}

\newtheorem{theorem}{Theorem}

\newtheorem{corollary}[theorem]{Corollary}
\newtheorem{lemma}[theorem]{Lemma}
\newtheorem{observation}[theorem]{Observation}

\newproof{pf}{Proof}

\begin{document}

\title{Asymptotically Tight Bound for the Conflict-Free Chromatic Index} 
\tnotetext[t1]{This research was supported by the AGH University of Krakow under grant no. 16.16.420.054, funded by the Polish Ministry of Science and Higher Education.}

\author[agh]{Mateusz Kamyczura}
\ead{kamyczuram@gmail.com}

\author[agh]{Jakub Przyby{\l}o}
\ead{jakubprz@agh.edu.pl}

\address[agh]{AGH University of Krakow, al. A. Mickiewicza 30, 30-059 Krakow, Poland}

\begin{abstract}
The conflict-free chromatic index of a graph $G$ is the minimum number of colours in an edge colouring of $G$ such that the neighbourhood of every edge contains a colour appearing exactly once. Its vertex analogue is the conflict-free chromatic number. These two parameters naturally coincide when the second is applied to the line graph of $G$. It is known that two variants of the latter parameter exhibit substantially different behaviour.

For closed vertex neighbourhoods, where each vertex belongs to its own neighbourhood, it is known that \(O(\ln^2 \Delta)\) colours suffice, where \(\Delta\) denotes the maximum degree of \(G\), and this bound is tight in order. In contrast, for open neighbourhoods, the corresponding parameter can be as large as \(\Delta+1\), but is bounded above by \(O(\ln^{2+\varepsilon} \Delta)\) for claw-free graphs. Since line graphs are claw-free, this yields the best known general upper bound for the edge analogue in the setting of open neighbourhoods.

For closed edge neighbourhoods, a stronger general upper bound of \(3\log_2 \Delta + 4\) is known. In this paper, we show that for both variants, the conflict-free chromatic index is bounded above by \((1+o(1))\log_2 \Delta\).  Since complete graphs require at least $(1 - o(1)) \log_2 \Delta$ colours in the closed as well as the open setting, 
our result is asymptotically tight in order and in the leading constant. Moreover, we strengthen this conclusion by showing that $(1 - o(1)) \log_2 \Delta$ colours are also typically necessary, as we prove this asymptotically almost surely for random graphs in both dense and relatively sparse regimes. 

Our proofs combine the probabilistic method with deterministic graph decomposition techniques, as well as new results relating the parameters under consideration with the chromatic number of a graph.
\end{abstract}

\begin{keyword}
conflict‐free colouring \sep conflict‐free chromatic index \sep conflict‐free chromatic number \sep closed edge neighbourhood  \sep open edge neighbourhood
\end{keyword}

\maketitle

\section{Introduction}

All graphs considered throughout the paper are assumed to be simple and finite. 
An edge colouring of a graph is called \emph{conflict-free} if the neighbourhood of every edge includes an element whose colour is \emph{unique}, that is different from all remaining colours appearing in the neighbourhood, unless the neighbourhood is empty. Vertex \emph{conflict-free} colourings of graphs are defined analogously, with the colour uniqueness requirement imposed on vertex neighbourhoods.
In either setting, both open and closed neighbourhoods are considered, leading to graph invariants exhibiting significantly distinct behaviours. Let us specify that the \emph{closed neighbourhood} of an edge $e=uv$ is the set of all edges incident with $u$ or $v$, while $e$ itself is not included in its \emph{open neighbourhood}.

The common root of these concepts are conflict-free colourings of hypergraphs, originally arising from problems in wireless communication and frequency assignment. In such settings, available frequencies are modeled as colours, and successful communication requires that within a given region (or neighbourhood) there exists a uniquely occurring frequency, ensuring the absence of interference. This naturally leads to conflict-free colouring models, which in particular specialize to graph-theoretic variants. See~\cite{EvenEtAl,KostochkaEtAl,SmorodinskyPhd,SmorodinskyApplications} for results related with the general hypergraph setting. 

Observe that for closed neighbourhoods, proper colourings are trivially conflict-free, both for vertex and edge colourings. Consequently, $\Delta+1$ colours always suffice, where $\Delta$ denotes the maximum degree of a graph. Nevertheless, significantly stronger bounds usually hold, as conflict-free colourings need not be proper.
Let us denote by $\chi_{\rm ccf}(G)$ the \emph{closed conflict-free chromatic number} of a graph $G$, that is the least number of colours in a conflict-free vertex colouring of $G$ within the closed neighbourhoods regime.
The correct order of magnitude for an upper bound on this parameter was established in the fundamental papers by Glebov, Szab\'o, and Tardos~\cite{GlebovEtAl}, and by Bhyravarapu, Kalyanasundaram, and Mathew~\cite{Hindusi}. They showed that 
\begin{equation}\label{Eq1.0}
\chi_{\rm ccf}(G)=O(\ln^2 \Delta)
\end{equation}
for all graphs, while there exists a family of graphs requiring 
$\Omega(\ln^2 \Delta)$  
colours.
The existence of the family exploits random graphs, end its resulting members have highly diversified degrees, see~\cite{GlebovEtAl}. 
On the other hand, it is known that $\chi_{\rm ccf}(G)=O(\ln\Delta)$ for regular and nearly regular graphs $G$, see~\cite{DebskiPrzybylo}.

Focussing on open vertex neighbourhoods, rather than closed ones, gives rise to the \emph{open conflict-free chromatic number}, $\chi_{\rm ocf}(G)$, which exhibits  a completely different behaviour. Its value can in particular be as large as $\Delta+1$, as witnessed by subdivisions of complete graphs. However, such graphs have a specific structure and contain vertices of very small degree, which leaves room for improvement under additional assumptions. Pach and Tardos~\cite{PachTardos} proved in particular that
\begin{equation}\label{Eq1.1}
\chi_{\rm ocf}(G)=O(\ln^2\Delta)
\end{equation}
provided that $G$ has minimum degree $\delta=\Omega(\ln\Delta)$.
On the other hand,  Bhyravarapu, Kalyanasundaram and Mathew~\cite{Hindusi3,Hindusi2}
showed that 
\begin{equation}\label{Eq1.2}
\chi_{\rm ocf}(G)=O(\ln^{2+\varepsilon}\Delta)
\end{equation}
 for $K_{1,r}$--free graphs $G$, for any fixed $r\geq 3$ and $\varepsilon>0$.
See~\cite{Abel2018} for other results on conflict-free vertex colourings.

Conflict-free \emph{edge} colourings have been studied more extensively for closed neighbourhods. 
In~\cite{DebskiPrzybylo} it was observed that the complete graphs witness that the corresponding parameter, the \emph{closed conflict-free chromatic index}, $\chi'_{\rm ccf}(G)$, can be as large as $(1-o(1))\log_2\Delta$. 
On the other hand, since conflict-free edge colourings of a graph $G$ correspond one-to-one to conflict-free vertex colourings of the line graph of $G$, it was originally known by~(\ref{Eq1.0}) that $\chi'_{\rm ccf}(G)\leq O(\ln^2\Delta)$.
However, the results of~\cite{DebskiPrzybylo}  demonstrated that conflict-free edge colourings allow for a substantial reduction in the number of required colours: in particular, it was shown that
$\chi'_{\rm ccf}(G) = O(\log_2\Delta)$, and more precisely that
$\chi'_{\rm ccf}(G)\leq C\log_2\Delta$ for every graph $G$ with  $\Delta=\Delta(G)>1$,
where $C>337$ is an absolute constant.
This bound was subsequently improved to $\chi'_{\rm ccf}(G)\leq 3\log_2\Delta+4$ for all graphs $G$
in~\cite{MPK}, and to  $\chi'_{\rm ccf}(G)\leq (1+o(1))\log_2\Delta$ for nearly regular graphs in~\cite{KPreg}; see also~\cite{GuoTrees24} for other results. 
The main result of this paper implies that 
\begin{equation}\label{CCFbound}
\chi'_{\rm ccf}(G)\leq (1+o(1))\log_2\Delta
\end{equation}
for all graphs  with $\Delta>1$, thus providing a general upper bound that is asymptotically tight, both in order and in the leading multiplicative constant.

For the \emph{open conflict-free chromatic index}, $\chi'_{\rm ocf}(G)$, until recently it was only known that, in general, $\chi'_{\rm ocf}(G)\leq O(\ln^{2+\varepsilon}\Delta)$, which follows by~(\ref{Eq1.2}), as line graphs of all graphs are claw-free, that is $K_{1,3}$-free. In~\cite{KPreg2} it was shown only that $\chi'_{\rm ocf}(G)\leq (1+o(1))\log_2\Delta$ for nearly regular graphs, strengthening the direct consequence of~(\ref{Eq1.1}).
At the same time, it was proven that complete graphs demonstrate that no better general upper bound than 
 $(1-o(1))\log_2\Delta$ can be guaranteed for $\chi'_{\rm ocf}(G)$.
 Due to several adaptations, our approach turns out to be sufficiently robust to establish an asymptotically optimal upper bound also for the open conflict-free chromatic index. More precisely, we prove that
\begin{equation}\label{OCFbound}
\chi'_{\rm ocf}(G)\leq (1+o(1))\log_2\Delta
\end{equation}
for all graphs with $\Delta>1$, which is tight in the same sense as for $\chi'_{\rm ccf}(G)$.

We shall furthermore demonstrate that our main results in~(\ref{CCFbound}) and~(\ref{OCFbound})  are tight in a significantly stronger sense, as they effectively determine the values of the considered parameters for `almost all' graphs, up to lower-order terms. To this end, we prove that asymptotically almost surely the closed and open conflict-free chromatic indices are at least $(1 - o(1)) \log_2 \Delta$ for random graphs $\mathbb{G}_{n,p}$ across a broad range of $p$, including $p = n^{-\beta}$ for any fixed $0 \leq \beta < 1$. This covers both dense and relatively sparse regimes; see Section~\ref{SectionRandomComments} for additional discussion and more precise formulations of these results.

The  conflict-free constraint in open neighbourhoods has recently been intensively studied  also for \emph{proper} vertex colourings (note that the closed neighbourhood setting would not impose any additional condition on proper colourings), see e.g.~\cite{TreesCF,OuterplanarCF,CPS,EUN,DWC-CHL,Fab,KP-proper}. One of the most interesting direction in this area concerns a generalisation of the problem in which multiple unique colours, rather than just one, are required in every neighbourhood, as proposed by Cho, Choi, Kwon and Park~\cite{EUN2}.
The extreme case, in which all colours in each neighbourhood are required to be unique, is equivalent to the well-established and well-studied notion of distance-two colourings, which may require a number of colours quadratic in  $\Delta$.
However, many intermediate variants of this parameter, where not all colours in the neighbourhoods are required to be unique, typically admit colourings using no more than $O(\Delta)$ colours. 
This leads naturally to the question of the threshold between these two types of behaviour of the corresponding invariants. Intriguing and involved results related with this direction have recently been obtained in particular by 
Liu and Reed~\cite{Liu-Reed,Liu-Reed_2}, as well as by Chuet, Dai, Ouyang and Pirot~\cite{CF-Pirot}.
See also~\cite{CFChoosabilityHardness,CHL} for list variants of these problems.

In the next section, we formalise the notation and state our main result, along with several tools from the probabilistic method and graph decomposition theory. In Section~\ref{DeterministicSection}, we present several observations concerning the studied parameters for certain families of graphs with simple structure, and relate them with the chromatic number of a graph, $\chi(G)$. While such a relation was already established for $\chi'_{\rm ccf}$ in~\cite{DebskiPrzybylo}, it remained unknown and appeared more elusive in the case of $\chi'_{\rm ocf}$. All these results shall be used in the following section, where we prove our main result, Theorem~\ref{GoodAsymptCFE-thm}, implying~(\ref{CCFbound}) and~(\ref{OCFbound}). The final section is devoted to random graphs and lower bounds for the studied invariants, and includes several additional remarks and further perspectives.

\section{Preliminaries and main result}

Let \( G = (V, E) \) be a graph. For any \( v \in V \) and subsets \( U \subseteq V \), \( F \subseteq E \), we use the following notation. 
By \( N_G(v) \) we denote the set of neighbours of \( v \) in \( G \). 
By \( d_G(v) \), abbreviated to \( d(v) \) when \( G \) is unambiguous, we denote the degree of \( v \) in \( G \). 
Further, \( d_U(v) \) denotes the number of neighbours of \( v \) in \( U \), while \( d_F(v) \) denotes the number of edges in \( F \) incident with \( v \).
We write \( G[U] = (U,E[U])\) for the subgraph of \( G \) induced by \( U \), i.e. with $E[U]$ including all edges of $E$ with both ends in $U$. 
Given an additional set \( U' \subseteq V \) with \( U' \cap U = \emptyset \), we denote by \( G[U, U'] \) the bipartite graph with vertex set \( U \cup U' \) and edge set \( E[U, U'] \) consisting of all edges in $E$ joining \( U \) with \( U' \).
For any positive integer \( k \), let \( [k] := \{1,2,\ldots,k\} \). 
An edge decomposition of \( G \) is a collection of its subgraphs \( G_i = (V_i, E_i) \), \( i \in [k] \), whose edge sets partition \( E \). 

For a vertex \( v \in V \), we denote by \( E(v) \) the set of edges incident with \( v \).
For an edge \( e = uv \in E \), we define its closed neighbourhood as \( E[e] = E(u) \cup E(v) \) and its open neighbourhood as \( E(e) = (E(u) \cup E(v)) \setminus \{e\} \). 
An assignment \( c : E \to C \) is called \emph{closed conflict-free} if, for every \( e \in E \), the set \( c(E[e]) \) contains a colour assigned to exactly one element of \( E[e] \). 
Similarly, \( c \) is called \emph{open conflict-free} if, for every \( e \in E \) with \( E(e) \neq \emptyset \), the set \( c(E(e)) \) contains a colour assigned to exactly one element of \( E(e) \).

Let us first observe that the two colouring notions are independent and not directly comparable, nor are the corresponding graph invariants. Indeed, one easily verifies that $\chi'_{\rm ccf}(C_5)=2<3=\chi'_{\rm ocf}(C_5)$, while $\chi'_{\rm ccf}(C_5^+)=3>2=\chi'_{\rm ocf}(C_5^+)$, where $C_5^+$ denotes the cycle $C_5$ with a single chord added. (Note also that these examples show $\chi'_{\rm ocf}$ may increase on subgraphs; to see the same for $\chi'_{\rm ccf}$, consider e.g. the graph obtained from $C_5^+$ by appending a pendant edge to a vertex of degree $2$ that is adjacent to exactly one vertex of degree $3$.)

Therefore, as we aim to give upper bounds for both parameters, to avoid repetitions, we make our task slightly more demanding and restrict attention to colourings that are conflict-free in both senses. Consequently, we investigate an auxiliary hybrid parameter that simultaneously bounds the two original ones. It shall also be convenient to extend our notation to partial edge colourings of $G$ (i.e.\ colourings that need not assign colours to all edges). Specifically, given a (partial) edge colouring $c$ of a graph $G=(V,E)$, we call a colour $\alpha$ \emph{unique} for an edge $e$ if $\alpha$ appears on some edge $e'\in E(e)$ in the open neighbourhood of $e$ and does not appear on any other edge $e''\in E[e]\setminus{e'}$ in the closed neighbourhood of $e$. If such a colour exists we say that $e$ is \emph{satisfied} by $c$ (or by $e'$ or by $\alpha$). From now on an edge colouring $c$ of $G$ shall be called \emph{conflict-free} if every non-isolated edge of $G$ is satisfied by $c$ -- note that such a colouring is both a closed conflict-free and an open conflict-free edge colouring of $G$. The minimum number of colours in a conflict-free colouring of all edges of $G$ shall be called the \emph{conflict-free chromatic index} of $G$, and denoted $\chi'_{\rm cf}(G)$. The key conclusions of our paper can be derived from the following result.

\begin{theorem}\label{GoodAsymptCFE-thm}
For every $\varepsilon>0$ there exists $\Delta_{\varepsilon}$ such that for every graph $G$ with maximum degree $\Delta\ge\Delta_{\varepsilon}$,
$$
\chi'_{\rm cf}(G)\leq (1+\varepsilon)\log_2\Delta.
$$
\end{theorem}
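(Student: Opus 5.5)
We will build the colouring from a degree-based decomposition of $G$. The key observation is the one behind the nearly regular results of~\cite{KPreg,KPreg2}. Take a random colouring with $k=(1+\varepsilon/2)\log_2\Delta$ colours in which colour $i$ is used with probability roughly $2^{-i}$, i.e.\ with geometrically decreasing weights. Consider an edge $e=uv$ whose closed neighbourhood has size $D\in[\Delta^{\varepsilon/4},2\Delta]$. Then there is a scale $i$ with $D2^{-i}\approx 1$, and with constant probability exactly one edge of $E(e)$ gets colour $i$. Several scales are comparable to $D$; using them (and independent repetitions across blocks of colours) pushes the failure probability to $\Delta^{-\omega(1)}$, or at least below $1/(e\cdot\Delta^2)$. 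The Lov\'asz Local Lemma then applies, since the event for $e$ depends only on edges within distance two, of which there are at most $O(\Delta^2)$.

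This handles edges whose two endpoints have large total degree, but small-neighbourhood edges spoil the uniform choice of weights. The plan has two steps.

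\textbf{Step 1.} Split $V$ into $L=\{v: d(v)<\Delta^{\varepsilon/4}\}$ and $H=V\setminus L$. Every edge touching $H$ has $|E[e]|\ge\Delta^{\varepsilon/4}$, so it is satisfied with high probability by the random geometric colouring on the main palette. The probabilistic bound has to be adapted to the hybrid notion: the unique colour must lie on an edge of $E(e)$ and appear nowhere else in $E[e]$. This only costs the single edge $e$ itself, which we handle by conditioning on $c(e)$.

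\textbf{Step 2.} Edges inside $G[L]$ form a graph of maximum degree below $\Delta^{\varepsilon/4}$. By induction on $\Delta$ (or by the cruder bound $3\log_2\Delta'+4$ of~\cite{MPK} for the closed variant), these need only $O(\varepsilon\log_2\Delta)$ colours. We use a disjoint fresh palette for them, so that they cannot destroy uniqueness on the big edges. The problem is that these fresh colours are then unique-candidates also for edges $uv$ with $u\in L$, $v\in H$, and conversely big edges adjacent to $L$ interfere with satisfaction of $L$-edges. To prevent this, we make each $L$-edge be satisfied within $G[L]$ by a colour from its own palette; a colour from the fresh palette appearing elsewhere at an $H$-vertex does not matter, since $H$-vertices are not endpoints of $L$-edges.

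What remains are pendant-like configurations: an edge of $G[L]$ that is isolated in $G[L]$ but has neighbours only through $L$–$H$ edges. For these we use the results relating $\chi'_{\rm cf}$ with the chromatic number and the simple-structure families from Section~\ref{DeterministicSection}. The idea is to colour a bounded-degree auxiliary structure with $O(1)$ or $O(\log\log\Delta)$ extra colours. For instance, we $2$-colour stars at $L$-vertices of the bipartite graph $G[L,H]$ so that each low edge sees a unique colour. These contribute only $o(\log_2\Delta)$ colours in total.

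The main obstacle is the interaction in Step~1 with the open-neighbourhood requirement at edges $uv$ where $u\in L$ has tiny degree. The relevant neighbourhood is dominated by $v$'s edges, and this is fine as long as $d(v)$ is large. I would first try to make the geometric weights chosen independently per vertex scale, i.e.\ with each colour's probability depending on the degree of the incident high vertex. If the dependencies then become unmanageable, I would instead partition $H$ into $O(1/\varepsilon)$ degree classes with nearly regular behaviour, reusing~\cite{KPreg,KPreg2} on each class with shared colours. Controlling that sharing is where I expect the real work to be.
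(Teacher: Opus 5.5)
There is a genuine gap in the central estimate of Step~1. With one random colouring in which colour $i$ has weight about $2^{-i}$, an edge $e$ with $|E[e]|=D$ has only $O(1)$ colours with $\lambda_i:=D2^{-i}=\Theta(1)$. A colour with $\lambda_i\gg 1$ is unique in $E[e]$ with probability about $\lambda_i e^{-\lambda_i}$. Colours with $\lambda_i\ll 1$ are unique with probability about $\lambda_i$, and these sum geometrically to $O(1)$. Hence the probability that $E[e]$ contains no unique colour is bounded below by an absolute constant. For instance, with probability roughly $e^{-2C}$ no edge of $E[e]$ gets a colour with $\lambda_i<C$, and then a singleton among the remaining colours has probability only $O(Ce^{-C})$. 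Such a failure probability cannot be fed into the Local Lemma with $\Theta(\Delta^2)$ dependencies. Pushing it below $\Delta^{-2}$ by independent repetitions costs $\Theta(\log\Delta)$ fresh blocks of $\Theta(\log\Delta)$ colours, i.e.\ $\Theta(\log^2\Delta)$ colours. That is just the vertex-setting bound, and it makes no use of the line-graph structure.

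There is also a structural problem. Your random phase already colours every edge touching $H$, so no uncoloured edges remain for the later repairs you describe (e.g.\ $2$-colouring stars of $G[L,H]$). Recolouring instead would destroy uniqueness elsewhere.

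The missing idea is to give up per-edge success and instead make \emph{each single colour} satisfy about half of the edges at \emph{every} vertex, while keeping the colouring sparse. The paper does this as follows.

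The random part is restricted to the $d$-core with $d\approx 10\log_2^3\Delta$. The degenerate remainder has $\chi\le d$, so Theorem~\ref{3log} handles it with $O(\log\log\Delta)$ colours.

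The core is split into $s\approx\log_2\Delta/k$ balanced pieces $H_i$, and $sk\approx\log_2\Delta$ rounds are run. Each round uses a random bipartition $U_1\cup U_2$ plus a reservoir $U_3^i$ that stays fixed for $k$ rounds. Every $v\in U_2$ colours a single edge $e_v$ into $U_3^i$ with the round's colour, and this satisfies every $U_1$--$U_2$ edge at once.

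An edge then stays unsatisfied through all rounds with probability at most $2^{2k}/\Delta$, whatever the degrees. So the Local Lemma is applied, via Chernoff, to the events ``more than $2\log_2\Delta$ always-bad edges at $v$'', not to per-edge events. Because this is degree-insensitive, no degree classes are needed, which disposes of the sharing problem you flag at the end.

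The remaining pieces are cheap:
\begin{itemize}
\item The leftover graph has maximum degree at most $2\log_2\Delta$ and is finished with $O(\log\log\Delta)$ fresh colours by the chromatic-number bound.
\item The reservoir edges cost $4$ colours per block, so $4s\approx 4\log_2\Delta/k\le\frac45\varepsilon\log_2\Delta$ in total.
\item Isolated leftover edges are absorbed by Observation~\ref{16m}, together with a spanning forest set aside at the start.
\end{itemize}
Since all these colourings are partial and use disjoint palettes, uniqueness established earlier is never destroyed.
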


Since $\chi'_{\rm ccf}(G)$ and $\chi'_{\rm ocf}(G)$ are clearly bounded above by $\chi'_{\rm cf}(G)$ for every graph $G$, the corresponding upper bounds~(\ref{CCFbound}) and~(\ref{OCFbound}) follow; see Corollary~\ref{GoodAsymptCFE-cor} in Section~\ref{SectionRandomComments}. 

In our arguments we shall employ a collection of standard tools from the probabilistic method. We begin with the symmetric form of the Lov\'asz Local Lemma; see, for instance,~\cite{LLL}.

\begin{lemma}[Lov\'asz Local Lemma]\label{LLL}
Let $\Omega$ be a finite set of events in a probability space. Suppose that each event $A \in \Omega$ is mutually independent of all other events in $\Omega$ except for at most $D$ of them, and that $\Pr(A) \le p$ for every $A \in \Omega$. If
\[
e p (D + 1) \le 1,
\]
then
\[
\mathbb{P}\!\left( \bigcap_{A \in \Omega} A \right) > 0.
\]
\end{lemma}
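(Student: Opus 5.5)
Note first that, as the paper words it, the conclusion should be read as $\mathbb{P}\bigl(\bigcap_{A\in\Omega}\overline{A}\bigr)>0$, i.e.\ with positive probability none of the (bad) events occurs. That is the form in which the lemma is always applied, and the literal intersection of the events $A$ can fail to have positive probability. I prove the complement version by the standard conditional-probability induction.

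\textbf{Trivial cases and set-up.} If $D=0$, all events are mutually independent. Since $p\le 1/e<1$, we get $\mathbb{P}\bigl(\bigcap\overline{A}\bigr)=\prod(1-\mathbb{P}(A))>0$, so assume $D\ge 1$. For each $A\in\Omega$ let $\Gamma(A)$ be a set of at most $D$ events such that $A$ is mutually independent of $\Omega\setminus(\Gamma(A)\cup\{A\})$.

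\textbf{Key claim.} For every $A\in\Omega$ and every $S\subseteq\Omega\setminus\{A\}$ with $\mathbb{P}\bigl(\bigcap_{B\in S}\overline{B}\bigr)>0$,
$$
\mathbb{P}\Bigl(A\,\Big|\,\bigcap_{B\in S}\overline{B}\Bigr)\le \frac{1}{D+1}.
$$
I prove this by induction on $|S|$. For $S=\emptyset$ it reads $\mathbb{P}(A)\le p\le \frac{1}{e(D+1)}\le\frac{1}{D+1}$.

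For the induction step, split $S=S_1\cup S_2$ with $S_1=S\cap\Gamma(A)$ and $S_2=S\setminus S_1$. If $S_1=\emptyset$, mutual independence gives $\mathbb{P}\bigl(A\mid\bigcap_{S}\overline{B}\bigr)=\mathbb{P}(A)\le p$, which suffices. Otherwise write
$$
\mathbb{P}\Bigl(A\,\Big|\,\bigcap_{S}\overline{B}\Bigr)=\frac{\mathbb{P}\bigl(A\cap\bigcap_{S_1}\overline{B}\,\big|\,\bigcap_{S_2}\overline{B}\bigr)}{\mathbb{P}\bigl(\bigcap_{S_1}\overline{B}\,\big|\,\bigcap_{S_2}\overline{B}\bigr)}.
$$
The numerator is at most $\mathbb{P}\bigl(A\mid\bigcap_{S_2}\overline{B}\bigr)=\mathbb{P}(A)\le p$, by mutual independence of $A$ from $S_2$.

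For the denominator, list $S_1=\{B_1,\dots,B_r\}$ with $r\le D$ and expand by the chain rule. It equals
$$
\prod_{i=1}^{r}\Bigl(1-\mathbb{P}\bigl(B_i\,\big|\,\overline{B_1}\cap\dots\cap\overline{B_{i-1}}\cap\textstyle\bigcap_{S_2}\overline{B}\bigr)\Bigr).
$$
Each conditioning set has fewer than $|S|$ elements and does not contain $B_i$. It also has positive probability, since it contains $\bigcap_S\overline{B}$, which has positive probability. So the induction hypothesis bounds each factor below by $1-\frac{1}{D+1}$, and the denominator is at least
$$
\Bigl(1-\frac{1}{D+1}\Bigr)^{D}=\Bigl(1+\frac1D\Bigr)^{-D}\ge \frac1e .
$$
Hence the ratio is at most $ep\le\frac{1}{D+1}$, closing the induction.

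\textbf{Conclusion.} Enumerate $\Omega=\{A_1,\dots,A_n\}$ and apply the chain rule together with the claim:
$$
\mathbb{P}\Bigl(\bigcap_{i}\overline{A_i}\Bigr)=\prod_{i=1}^n\Bigl(1-\mathbb{P}\bigl(A_i\,\big|\,\textstyle\bigcap_{j<i}\overline{A_j}\bigr)\Bigr)\ge\Bigl(1-\frac1{D+1}\Bigr)^n>0 .
$$
This uses an inner induction on $i$ to ensure that each conditioning event has positive probability, so that the claim applies at every step.

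The main obstacle is the denominator estimate. One has to formulate the inductive statement so that it applies to arbitrary subsets $S$, not only to initial segments. One also has to track the positivity of the conditioning events, so that every conditional probability in the chain-rule expansion is well defined.
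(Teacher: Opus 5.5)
Your proof is correct. You are also right that the conclusion must be read as $\mathbb{P}\bigl(\bigcap_{A\in\Omega}\overline{A}\bigr)>0$, which is exactly how the paper applies the lemma ("with positive probability none of the events occurs"). The paper gives no proof of its own and only cites Alon--Spencer, whose argument (the general local lemma specialised to $x_A=1/(D+1)$, using $(1-\frac{1}{D+1})^D\ge 1/e$) is the same conditional-probability induction you give.
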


We shall also repeatedly apply concentration inequalities of Chernoff type. The following two forms shall suffice for our purposes; see, e.g.,~\cite{RandOm,ChernoffBook}.

\begin{lemma}[Chernoff Bounds]\label{Chu+d}
Let $X_1,X_2,\ldots,X_n$ be a collection of independent Ber\-nou\-lli random variables where $\Pr(X_i = 1) = p_i$ for $i\in [n]$. Denote $X = \sum_{i=1}^n X_i$.
Then, for every positive $ t \le \mathbb{E}(X)$,
\begin{equation}\label{CBL}
\mathbb{P}\bigl(X \le \mathbb{E}(X) - t\bigr) \le \exp\!\left(-\frac{t^2}{2\,\mathbb{E}(X)}\right),
\end{equation}
while for each $t > 0$,
\begin{equation}\label{CBR}
\mathbb{P}\bigl(X \ge \mathbb{E}(X) + t\bigr) \le \exp\!\left(-\frac{t^2}{t + 2\,\mathbb{E}(X)}\right).
\end{equation}
\end{lemma}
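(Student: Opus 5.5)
We prove the two inequalities of Lemma~\ref{Chu+d} by the standard exponential-moment (Bernstein--Chernoff) method. Write $\mu=\mathbb{E}(X)=\sum_i p_i$. For any $\lambda>0$, Markov's inequality applied to $e^{\lambda X}$ gives $\mathbb{P}(X\ge \mu+t)\le e^{-\lambda(\mu+t)}\mathbb{E}(e^{\lambda X})$. Applied to $e^{-\lambda X}$ it gives $\mathbb{P}(X\le \mu-t)\le e^{\lambda(\mu-t)}\mathbb{E}(e^{-\lambda X})$. Independence factorises the moment generating function, and $1+x\le e^x$ yields
\[
\mathbb{E}(e^{\pm\lambda X})=\prod_{i=1}^n\bigl(1+p_i(e^{\pm\lambda}-1)\bigr)\le \exp\bigl(\mu(e^{\pm\lambda}-1)\bigr).
\]
Thus both tails reduce to the same bounds as for a Poisson variable with mean $\mu$.

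For the upper tail we choose $\lambda=\ln(1+\delta)$ with $\delta=t/\mu$. This gives
\[
\mathbb{P}(X\ge\mu+t)\le \exp\bigl(-\mu[(1+\delta)\ln(1+\delta)-\delta]\bigr).
\]
It remains to verify the elementary inequality $(1+\delta)\ln(1+\delta)-\delta\ge \delta^2/(2+\delta)$ for all $\delta>0$. Multiplying by $\mu$ then gives exactly $t^2/(t+2\mu)$, which is~(\ref{CBR}). To check the inequality, set $f(\delta)=(1+\delta)\ln(1+\delta)-\delta-\delta^2/(2+\delta)$. We have $f(0)=0$ and
\[
f'(\delta)=\ln(1+\delta)-\frac{\delta^2+4\delta}{(2+\delta)^2}.
\]
Since $f'(0)=0$, it suffices to show $f''\ge 0$, where
\[
f''(\delta)=\frac{1}{1+\delta}-\frac{8}{(2+\delta)^3}.
\]
Nonnegativity of $f''$ is equivalent to $(2+\delta)^3\ge 8(1+\delta)$, which is clear after expanding: $8+12\delta+6\delta^2+\delta^3\ge 8+8\delta$.

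For the lower tail we take $\lambda=-\ln(1-\delta)$ with $\delta=t/\mu\in(0,1]$. The case $\delta=1$ can be handled by the limit, or directly: $\mathbb{P}(X=0)\le e^{-\mu}\le e^{-\mu/2}$. This choice gives
\[
\mathbb{P}(X\le\mu-t)\le\exp\bigl(-\mu[(1-\delta)\ln(1-\delta)+\delta]\bigr).
\]
We then need $g(\delta)=(1-\delta)\ln(1-\delta)+\delta-\delta^2/2\ge0$ on $[0,1)$. This holds because $g(0)=g'(0)=0$, where $g'(\delta)=-\ln(1-\delta)-\delta$, and $g''(\delta)=\frac{1}{1-\delta}-1\ge0$. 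Multiplying by $\mu$ gives $\mu\delta^2/2=t^2/(2\mu)$, which is~(\ref{CBL}).

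The only step needing any care is the calculus inequality for the upper tail with the specific denominator $t+2\mu$. Everything else is the routine Markov/MGF argument. The convexity computation above settles that step.
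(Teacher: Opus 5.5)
Your proof is correct. The paper gives no proof of this lemma; it simply quotes it from \cite{RandOm,ChernoffBook}, where it is derived by essentially this exponential-moment argument (Janson, {\L}uczak and Ruci\'nski even obtain the slightly stronger denominator $2(\mathbb{E}(X)+t/3)$ in the upper tail). The only omission is the trivial case $\mathbb{E}(X)=0$ in~(\ref{CBR}): there $\delta=t/\mathbb{E}(X)$ is undefined, but $X=0$ almost surely, so the left-hand side is $0$.
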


Note that both inequalities above remain valid (after minor adjustments in formulation) even when only suitable bounds on $\mathbb{E}(X)$ are specified, rather than its exact value.

Recall that a graph $G=(V,E)$ is $k$-degenerate if it admits a vertex ordering such that each $v\in V$ has at most $k$ neighbours that are earlier in the ordering. Consequently, a simple greedy colouring implies a well-known upper bound for the chromatic number $\chi(G)\leq k+1$ of every such $G$.
We shall make use of the concept of a \emph{$d$-core} of a graph $G$, which we define as in~\cite{BollobasKCore,LuczakKCore} to be the maximal subgraph of $G$ with minimum degree at least $d$ (whereas it is sometimes understood in other sources as any connected component of the specified subgraph).
It is known and straightforward to verify that such subgraph is well defined, possibly empty, and can be obtained from $G$ by a simple algorithm which repeatedly removes minimum-degree vertices from $G$, together with their incident edges, as long as the remaining graph has a vertex with degree below $d$. 
Note that the set of removed \emph{edges} induces a $(d-1)$-degenerate subgraph of $G$, which is certified by the reversed ordering according to which these vertices were removed from $G$ (supplemented by placing the rest of the vertices at the beginning of the ordering). 
Consequently, we obtain the following conclusion. 
\begin{observation}\label{deg_partition}
For every integer $d \ge 0$, each graph $G$ can be edge decomposed into a $(d-1)$-degenerate subgraph $D$ and a subgraph $H$ which is empty or has minimum degree at least $d$, such that each edge of $D$ is incident with at most one vertex of $H$.
\end{observation}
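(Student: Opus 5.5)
The statement immediately above is Observation~\ref{deg_partition}. Its proof formalises the core-peeling argument sketched before it.

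Fix $d\ge 0$. Run the standard peeling procedure on $G$. Starting from $G_0=G$, as long as the current graph $G_i$ has a vertex $v_{i+1}$ with $d_{G_i}(v_{i+1})<d$, delete $v_{i+1}$ together with all its incident edges to obtain $G_{i+1}$. The process stops after finitely many steps, say at $G_m$. Let $H=G_m$. By the stopping rule, $H$ is either empty or has minimum degree at least $d$. Let $D$ be the spanning subgraph of $G$ whose edge set consists of all edges deleted during the process, that is, $E(G)\setminus E(H)$. Then $E(D)$ and $E(H)$ partition $E(G)$, so $\{D,H\}$ is an edge decomposition of $G$.

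Next we check that each edge of $D$ meets $H$ in at most one vertex. Every edge of $D$ was removed at some step $i+1$ together with the removed vertex $v_{i+1}$. Hence at least one of its ends, namely $v_{i+1}$, does not belong to $V(H)$.

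Finally we show that $D$ is $(d-1)$-degenerate. Order $V(G)$ by first listing the vertices of $V(H)$ in arbitrary order, followed by $v_m,v_{m-1},\dots,v_1$, the reverse of the removal order.
\begin{itemize}
\item A vertex of $V(H)$ is incident with no $D$-edge to an earlier vertex. Any $D$-edge has an end outside $V(H)$, so a $D$-edge between two vertices of $V(H)$ is impossible.
\item Consider $v_{i+1}$. Its earlier neighbours in $D$ lie in $V(H)\cup\{v_{i+2},\dots,v_m\}=V(G_{i+1})$. Any $D$-edge from $v_{i+1}$ to $V(G_{i+1})$ is present in $G_i$, since such an edge cannot have been removed before step $i+1$, because both its ends survive until then. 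So the number of such edges is at most $d_{G_i}(v_{i+1})\le d-1$.
\end{itemize}
Thus every vertex has at most $d-1$ earlier neighbours in $D$, and $D$ is $(d-1)$-degenerate.

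The case $d=0$ is trivial: nothing is removed, $H=G$, and $D$ is edgeless, which is consistent with the statement. The only point needing care is the degeneracy count, which follows from the observation that the back-edges of $v_{i+1}$ in $D$ are exactly its edges in $G_i$.
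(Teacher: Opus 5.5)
Your proof is correct and follows the paper's own argument. You peel off vertices of degree below $d$, take the resulting $d$-core as $H$, and certify the $(d-1)$-degeneracy of the removed edges by the reversed removal order, preceded by the vertices of $H$; removing arbitrary rather than minimum-degree low-degree vertices changes nothing. The only quibble is the vacuous case $d=0$: a spanning edgeless $D$ on a nonempty vertex set is $0$-degenerate but not $(-1)$-degenerate under the paper's definition, so there $D$ should be taken with no vertices, which is immaterial for the paper since there $d$ is large.
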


We shall also make use of the following straightforward consequence of Observation 29 from~\cite{PP}, which 
also appears, yet without proof, in~\cite{deWerra}.  
\begin{corollary}[\cite{deWerra,PP}]\label{EqualDecompositionCorollary}
For every positive integer $k$ and a graph $G$ there is an edge decomposition of $G$ into $G_1,\ldots,G_k$ 
such that 
\[
\frac{d_G(v)}{k}-2 \le d_{G_i}(v) \le \frac{d_G(v)}{k}+2
\]
for every $v\in V$ and $i\in [k]$.
\end{corollary}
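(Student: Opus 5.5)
The plan is a local-improvement (potential) argument. Each improvement step rebalances a pair of colour classes along an Euler tour, in the spirit of the classical proofs of de Werra's equitable edge-colouring theorem.

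Start from an arbitrary edge decomposition $G_1,\ldots,G_k$ of $G$, e.g.\ all edges in $G_1$. It suffices to reach a decomposition in which
\[
|d_{G_i}(v)-d_{G_j}(v)|\le 2 \quad \text{for every } v\in V \text{ and } i,j\in[k].
\]
Indeed, $d_G(v)/k$ is the average of the numbers $d_{G_1}(v),\ldots,d_{G_k}(v)$. It therefore lies between their minimum and maximum, which differ by at most $2$. Hence every $d_{G_i}(v)$ is within $2$ of $d_G(v)/k$, as required. As potential function I take
\[
\Phi=\sum_{v\in V}\sum_{i\in[k]} d_{G_i}(v)^2 ,
\]
and I consider a decomposition minimising $\Phi$. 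Suppose, for contradiction, that some vertex $v$ and colours $i,j$ satisfy $d_{G_i}(v)\ge d_{G_j}(v)+3$.

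Let $H=G_i\cup G_j$ and let $K$ be the component of $H$ containing $v$. I recolour $K$ with the two colours $i,j$, leaving all other edges untouched. Add a dummy vertex $w$ joined to every odd-degree vertex of $K$. Every degree is now even, so the resulting (multi)graph has an Euler circuit. I start this circuit at $v$ and colour the edges along it alternately $i,j,i,j,\ldots$, then delete the dummy edges. At every vertex $u\neq v$ of $K$, each passage of the circuit through $u$ uses two consecutive edges, one of each colour. At most one edge at $u$ is a dummy edge, which is discarded. Hence $|d_{G_i}(u)-d_{G_j}(u)|\le 1$ after recolouring. At the start vertex $v$ the only possible extra defect comes from the first and last edges of the circuit. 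Handling the possible dummy edge at $v$ as well, the new imbalance at $v$ is at most $2$.

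The sum $d_{G_i}(u)+d_{G_j}(u)$ is preserved at every vertex, and all other colour classes are unchanged. Since $a^2+b^2$ with $a+b$ fixed is minimised by making $|a-b|$ as small as possible, the contribution of each $u\neq v$ does not increase, because its imbalance is now $\le 1$. At $v$ the imbalance drops from at least $3$ to at most $2$, so $\Phi$ strictly decreases. This contradicts minimality and proves the claim.

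I expect the main obstacle to be the bookkeeping at the start vertex $v$ when $v$ itself has odd degree in $K$. I need to check that the dummy edge at $v$ and the closing edge of the circuit together cannot push the imbalance at $v$ to $3$. My first attempt is to start the circuit at $w$ instead, with the first edge being the dummy edge $wv$. Then $v$ behaves like an interior vertex with one discarded edge, giving imbalance $\le 1$. If $v$ has even degree in $K$, I start the circuit at $v$; the circuit splits the edges at $v$ into pairs that are consecutive, plus the first and last edges. So the imbalance at $v$ is $0$ or $2$, which still suffices.
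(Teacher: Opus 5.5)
Your proof is correct. The paper gives no argument of its own for this statement; it simply imports it from Observation~29 of \cite{PP}, where it is also attributed to \cite{deWerra}.

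You instead prove directly the stronger \emph{nearly equitable} property $|d_{G_i}(v)-d_{G_j}(v)|\le 2$, and then average. You do this by minimising $\Phi$ and recolouring a component of $G_i\cup G_j$ along an Euler circuit. Your case split at $v$ is exactly what is needed. If $d_K(v)$ is odd and the circuit starts at $v$, deleting the dummy edge could leave imbalance $3$. Starting at the dummy vertex instead makes $v$ an interior vertex with imbalance $1$.

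The more common derivation from the literature goes through de Werra's theorem that every bipartite graph admits an equitable $k$-edge-colouring:
\begin{itemize}
\item orient $G$ so that $|d^+(v)-d^-(v)|\le 1$, using Euler circuits after adding a dummy vertex;
\item split each $v$ into an out-copy and an in-copy, and colour the resulting bipartite graph equitably;
\item observe that $d_{G_i}(v)$ then lies between $\lfloor d^+(v)/k\rfloor+\lfloor d^-(v)/k\rfloor$ and $\lceil d^+(v)/k\rceil+\lceil d^-(v)/k\rceil$, hence within $2$ of $d_G(v)/k$.
\end{itemize}
That route takes a few lines once the bipartite theorem is granted. Yours is fully self-contained, at the cost of doing the potential-function and Euler-circuit bookkeeping by hand in the non-bipartite setting.
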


\section{Deterministic upper bounds for the conflict-free chromatic index}\label{DeterministicSection}

One of the key ingredients of the proof of our main result is an upper bound for the investigated parameters linear in the chromatic number of a graph,  $\chi(G)$.
In~\cite{MPK} it was already shown that $\chi'_{ccf}(G)\leq 3\log_2 (\chi(G))+4$.
However, an analogous result in the open neighbourhood setting was not known and appeared more challenging to obtain.
Below, we provide a corresponding bound for the most restrictive variant of the conflict-free chromatic index introduced above, $\chi'_{\rm cf}(G)$, building on a carefully refined idea from~\cite{MPK}.
Along the way, we also isolate several auxiliary observations that shall be useful in the main argument. To this end, we first introduce the necessary structural notions.

Let $G=(V,E)$ be a bipartite graph. Any partition $V=X\cup Y$ such that $X$ and $Y$ are independent sets in $G$ is called its \emph{vertex bipartition}. Given a subgraph $H$ of $G$ with a fixed vertex bipartition $V=X\cup Y$, we say a vertex $v \in Y$ is \emph{covered} (by $H$) if $d_{H}(v)\ge 1$, and we call it \emph{multiply-covered} if $d_{H}(v)\ge 2$. Furthermore, we say a subset of vertices $Y^{\prime}\subseteq Y$ is \emph{covered} if each vertex in $Y^{\prime}$ is covered.

A connected graph is called \emph{nontrivial} if it has at least two edges; otherwise it is called \emph{trivial}. 
A complete bipartite graph $K_{1,m}$ is called a \emph{star}, and we refer to its vertex of degree $m$ as the \emph{center}. We call the star \emph{nontrivial} if $m\ge 2$, i.e., when it is a nontrivial graph. Finally, a \emph{subdivided star} is a graph which arises from a star by a single subdivision of every edge, i.e., substituting every edge with a path of length 2. We call it a \emph{nontrivial subdivided star} if it arises from a nontrivial star,
that is when it has at least four edges.

\begin{observation}\label{idk}
Every connected nontrivial bipartite graph $G = (V, E)$ with a vertex bipartition $V = X \cup Y$ which is not a star centered at $Y$ has a subgraph $H$ which covers $Y$ such that every connected component of $H$ is either a nontrivial star centered at $X$ or a nontrivial subdivided star centered at $Y$.
\end{observation}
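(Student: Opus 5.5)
We argue by taking a suitable minimal subgraph and analysing its structure. Since $G$ is connected and nontrivial, every vertex of $Y$ has degree at least $1$, so the whole edge set $E$ covers $Y$. Among all subgraphs $H\subseteq G$ that cover $Y$ and whose connected components are all nontrivial, choose one with the minimum number of edges. We then show that every component of such a minimal $H$ is a nontrivial star centred at $X$ or a nontrivial subdivided star centred at $Y$.

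First we must check that at least one admissible $H$ exists. We use $H=G$ itself, which is connected and nontrivial. Note that $Y\neq\emptyset$: otherwise $G$ would have no edges.

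Now fix a minimal $H$ and a component $K$ of it. The key structural fact is the following. Deleting any edge $e=xy$ ($x\in X$, $y\in Y$) of $K$ must destroy the defining property of $H$. So either $y$ becomes uncovered, i.e.\ $d_H(y)=1$, or some resulting component becomes trivial, i.e.\ a single edge or an isolated vertex carrying no edge. Isolated vertices can simply be dropped, so the only danger is a component consisting of a single edge. From this we draw three consequences.

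\textbf{$K$ is a tree.} If $K$ contained a cycle, removing a cycle edge would keep $K$ connected and nontrivial. Its $Y$-endpoint would still have degree at least $1$, since every vertex on the cycle has degree at least $2$. This contradicts minimality.

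\textbf{$Y$-vertices are leaves unless they are pendant-edge centres.} Take $y\in Y$ with $d_K(y)\ge 2$ and any edge $xy$ of $K$. Then $y$ remains covered after deleting $xy$, so the deletion must create a single-edge component. That component is either $\{x,y'\}$ on the $x$-side or an edge at $y$ on the $y$-side.

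\textbf{The diameter of $K$ is at most $4$, and the centre is forced.} Consider a longest path in the tree $K$. If it has length at least $3$, some internal edge lies away from the ends. We delete an edge whose removal leaves two components that each have at least two edges, and whose $Y$-endpoint has degree at least $2$. Such an edge exists if the path has length at least $5$, or length $4$ with a centre in $X$. This yields a contradiction.

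What remains is diameter $2$, giving a star, or diameter $3$ or $4$. For diameter $2$, the star's centre must lie in $X$, or else the leaves are in $X$ and we are looking at a star centred at $Y$. Such a $K$ equals, or contains, a $Y$-centred star, and there the leaf edges are removable unless the star has only one edge. We handle this by a direct argument: a $Y$-vertex of degree at least $2$ in a star centred at $Y$ can lose an edge while staying nontrivial if its degree is at least $3$, and if its degree is $2$ we get a path of length $2$ with centre in $Y$. Such a path is a trivial-looking subdivided star, so it needs care.

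For diameter $4$ with centre $y\in Y$, minimality forces every $X$-neighbour of $y$ to have exactly one further $Y$-leaf. This gives the subdivided star, and it must have at least two branches because the diameter is $4$. Diameter $3$ is a path $y'xx'y''$-type double star, which we split or reduce.

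\textbf{The main obstacle.} The hardest step will be the small cases: paths $P_3$ centred at $Y$, and double stars of diameter $3$, which minimality alone may not exclude. For these I would modify the choice of $H$ instead of relying on minimality. Among minimal-edge covers, choose one maximising the number of components, or use an extremal choice such as a maximal forest that avoids these configurations. Failing that, I would re-route locally using connectivity of $G$. A $Y$-centred $P_3$ component $x_1yx_2$ can be merged with another component through an edge of $G$, or replaced by $X$-centred stars at $x_1$ or $x_2$ that borrow further $Y$-neighbours. The hypothesis that $G$ is not a star centred at $Y$ is exactly what guarantees such a re-routing is available when $K$ would otherwise be all of $G$.
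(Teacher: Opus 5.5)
\textbf{There is a genuine gap.} Your edge-minimality analysis, once tidied, is correct as far as it goes. It shows that every component of an edge-minimum subgraph that covers $Y$ and has nontrivial components is one of three things: a star centred at $X$, a nontrivial subdivided star centred at $Y$, or a path $x_1\,y\,x_2$ with $y\in Y$. The third case is exactly what the statement forbids, and your proposal ends with a list of possible remedies rather than an argument.

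The first remedy you suggest, maximising the number of components among edge-minimum choices, actually fails. Take $G$ to be the path $x_2\,y\,x_1\,y'\,x'\,y''$ with $Y=\{y,y',y''\}$. The edge-minimum admissible subgraphs have $4$ edges. The good one, the path $y\,x_1\,y'\,x'\,y''$, has one component. The only one with two components is $\{x_2y,\,yx_1\}\cup\{y'x',\,x'y''\}$, which contains the forbidden $Y$-centred path $x_2yx_1$. The hypothesis that $G$ is not a star centred at $Y$ is precisely what must rule this case out, and in your write-up it only appears in the final, unexecuted sentence.

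\textbf{The missing idea} is to build into the admissible class the invariant that every $X$-vertex of $H$ has degree at least $2$ in $H$. This is exactly the invariant the paper's deletion procedure maintains.
\begin{itemize}
\item Such subgraphs exist: delete from $G$ all $X$-vertices of degree $1$. Every $y\in Y$ stays covered, since otherwise connectivity would make $G$ a star centred at $y$.
\item Under this invariant every component is automatically nontrivial, and no $Y$-centred $P_3$ can occur.
\item For an edge-minimum $H$ in this class, two deletions would contradict minimality: deleting an edge $xy$ with $d_H(x)\ge 3$ and $d_H(y)\ge 2$, and deleting all edges at an $x$ whose neighbours are all multiply covered. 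These two facts force exactly the two permitted component types.
\end{itemize}

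Alternatively, you could keep your class and, among edge-minimum choices, minimise the number of $Y$-centred $P_3$ components via an explicit exchange:
\begin{itemize}
\item If $x_1$ has another $G$-neighbour $y'$, swap $x_2y$ for $x_1y'$.
\item If both $x_1$ and $x_2$ are leaves of $G$, use the non-star hypothesis to find $y\,x_3\,y_3$ in $G$ and swap $x_1y,\,x_2y$ for $yx_3,\,x_3y_3$. If $x_3$ already lies in $H$, adding $yx_3$ alone suffices and contradicts minimality.
\end{itemize}
Either way, this step has to be carried out, and as written it is not.

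\textbf{Smaller slips.} ``$y'xx'y''$'' is not a path in a bipartite graph. The diameter-$3$ case is excluded directly by minimality: delete the edge joining the $Y$-vertex of degree at least $2$ to its $X$-leaf.
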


\begin{pf}
Starting with $H = G$, $X^{\prime} = X$ and $X^{\prime\prime} = \emptyset$, we shall repeatedly discard some edges and vertices from $H$ and update $X^{\prime}, X^{\prime\prime}$ in order to end up with a desired subgraph $H$ of $G$ with vertex bipartition $X^{\prime} \cup Y$. To that end, we repeat the following steps.

If there is a vertex $v \in X^{\prime}$ of degree $1$ in the current subgraph $H$, then we delete it from $H$ together with its incident edge, add $v$ to $X^{\prime\prime}$, and set $X^{\prime} = X \setminus X^{\prime\prime}$. Note that afterwards $H$ still covers $Y$, as $G$ is not a star centered at $Y$.

If in $X^{\prime}$ there are no more vertices of degree $1$, then every vertex in $X^{\prime}$ has degree at least $2$ in $H$. In this case, if there exists a vertex $v \in X^{\prime}$ that is adjacent in $H$ exclusively to multiply-covered vertices in $Y$, we again delete it from $H$ together with all its incident edges, add $v$ to $X^{\prime\prime}$, and set $X^{\prime} = X \setminus X^{\prime\prime}$. Note that afterwards all vertices in $X^{\prime}$ still have degrees at least $2$ in $H$, and $H$ continues to cover $Y$.

If there are no more vertices in $X^{\prime}$ with exclusively multiply-covered neighbourhoods left, we choose an arbitrary vertex $v \in X^{\prime}$ with $d_H(v) \ge 3$ and at least one multiply-covered neighbour $u \in Y$, if any such $v$ exists, and we delete the edge $uv$ from $H$. Note that afterwards, all vertices in $X^{\prime}$ still have degrees at least $2$ in $H$, and $H$ still covers $Y$. We repeat this procedure as long as there is any such vertex $v$ in $X^{\prime}$ left.

We claim that at the end of this process, we obtain $H$ consistent with our requirements. As argued, $H$ surely covers $Y$. By construction, any vertex $v \in X^{\prime}$ of degree at least $3$ in $H$ has no multiply-covered neighbours, and thus the edges incident with $v$ in $H$ form a connected component of $H$ which is a nontrivial star. Analogously, the same holds for any vertex $v \in X^{\prime}$ of degree $2$ in $H$ which has no multiply-covered neighbours. Note that by our construction, every remaining vertex $v \in X^{\prime}$ has degree $2$ in $H$ and exactly one multiply-covered neighbour. Therefore, it is easy to see that all multiply-covered vertices in $Y$ are centers of nontrivial subdivided stars, making up the remaining connected components of $H$.
\qed
\end{pf}

\begin{observation}\label{3bi}
Every bipartite graph $G = (V, E)$ with no trivial connected components and vertex bipartition $V = X \cup Y$ admits a partial edge $3$-colouring satisfying all edges of $G$ such that every vertex $v \in Y$ is incident with a uniquely coloured edge, and every vertex $u \in X$ is incident with a uniquely coloured edge whenever at least one edge in $E(u)$ is coloured.
\end{observation}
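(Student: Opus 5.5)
We treat each connected component of $G$ separately. Since $G$ has no trivial components, every component has at least two edges, and colourings of different components do not interact. There are two cases.

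\emph{Case 1: the component is a star centered at some $y\in Y$ with $m\ge 2$ leaves in $X$.} Colour one edge $1$, a second edge $2$, and leave the rest uncoloured. Then $y$ sees colour $1$ exactly once. Every leaf $x$ whose edge is coloured sees that colour exactly once. The closed neighbourhood of any edge is the whole star. An uncoloured edge, or the edge coloured $2$, is satisfied by colour $1$, and the edge coloured $1$ is satisfied by colour $2$.

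\emph{Case 2: all other components.} Apply Observation~\ref{idk} to obtain a subgraph $H$ covering $Y$ whose components are nontrivial stars centered at $X$ or nontrivial subdivided stars centered at $Y$. We colour only edges of $H$.
\begin{itemize}
\item In a star centered at $x\in X$ (with $k\ge2$ edges), colour one edge $2$, one edge $3$, and all remaining edges $1$.
\item In a subdivided star with center $y_0$, middle vertices $x_1,\dots,x_m$ and leaves $y_1,\dots,y_m$, colour every edge $x_iy_i$ with $1$, colour $y_0x_1$ with $2$, and colour $y_0x_i$ with $3$ for $i\ge2$.
\end{itemize}

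We then verify the vertex conditions.
\begin{itemize}
\item Each $y\in Y$ with $d_H(y)=1$ has exactly one coloured edge.
\item Each center $y_0$ sees colour $2$ exactly once.
\item Each star center $x$ sees colours $2$ and $3$ once.
\item Each $x_i$ sees two distinct colours.
\item $X$-vertices outside $V(H)$ carry no coloured edges.
\end{itemize}
Next we verify that edges of $H$ are satisfied, using the fact that non-$H$ edges are uncoloured.
\begin{itemize}
\item A star edge is satisfied by whichever of colours $2$ and $3$ it does not carry. Such a colour lies on another edge at $x$ and appears nowhere else in the closed neighbourhood, because the leaf side carries no other coloured edge.
\item An edge $y_0x_i$ is satisfied by colour $1$ on $x_iy_i$, since colour $1$ does not occur at $y_0$.
\item An edge $x_iy_i$ is satisfied by the colour of $y_0x_i$.
\end{itemize}

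The main obstacle is the non-$H$ edges $e=xy$. Each such edge should be satisfied either by the unique coloured edge at $y$, whose colour must then be absent at $x$, or by a uniquely coloured edge at $x$, whose colour must then be absent at $y$. The coloured sets at $y$ are very small: one colour, or $\{2,3\}$ at a center. So the plan is to exploit freedom in the scheme above. In each component we may choose which edges receive $2$ and $3$, and we may permute the roles of the colours $1,2,3$ per component. We would then argue that for every $e$ at least one of the two options works, by a case analysis on the types of $x$ and $y$ (star center, middle vertex, not in $H$; leaf, center). If a conflict persists, the fallback is to modify Observation~\ref{idk}'s construction. For example, if $x\notin V(H)$ then $x$ has no coloured edges, so $e$ is automatically satisfied by $y$'s unique colour. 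Only edges between $y$ and a vertex $x$ of a different $H$-component need care, and there the colour choices would be made greedily along components.
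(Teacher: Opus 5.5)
\textbf{There is a genuine gap.} The following parts of your argument are fine:
\begin{itemize}
\item the star-centred-at-$Y$ case;
\item the vertex conditions;
\item the satisfaction of the edges of $H$;
\item the non-$H$ edges $xy$ where $y$ carries a single coloured edge. Every vertex of $V(H)\cap X$ has two uniquely coloured edges of distinct colours, and one of these avoids $y$'s colour.
\end{itemize}

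The gap is the remaining case: a non-$H$ edge $xy_0$ with $x$ a star centre and $y_0$ a subdivided-star centre. Here your explicit colouring actually fails. Your star centres have unique colours $2,3$ and put colour $1$ on all remaining edges. Every subdivided-star centre also carries exactly the colours $2$ and $3$. So in $E[xy_0]$ colours $2$ and $3$ each appear at least twice. Colour $1$ appears $k-2$ times, which is $0$ or at least $2$ unless $k=3$.

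A concrete counterexample: let $x$ be adjacent to $a,b,y_0$, let $y_0$ be adjacent to $x_1,x_2$, and add pendant edges $x_1y_1$ and $x_2y_2$. The procedure of Observation~\ref{idk} deletes $xy_0$ in its third step. It returns the star $\{xa,xb\}$ and the subdivided star centred at $y_0$. Your colouring then gives the closed neighbourhood of $xy_0$ the colours $2,3,2,3$, so $xy_0$ is unsatisfied.

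Your closing paragraph ("permute the colours per component", "choose greedily", "modify Observation~\ref{idk}") is only a plan. You give no argument that the per-component choices can be made consistently. A single star component can be joined by non-$H$ edges to many subdivided-star centres, and vice versa.

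The missing idea is a uniform choice. You need one colour that is unique at every vertex of $V(H)\cap X$ and never appears at a subdivided-star centre. The paper does this as follows:
\begin{itemize}
\item each star centred at $X$ gets one edge $1$, one edge $2$, and the rest $3$;
\item each subdivided star gets colour $1$ on its outer edges and colours $2,3$ on the edges at its centre.
\end{itemize}
Then every vertex of $V(H)\cap X$ has a uniquely coloured edge of colour $1$. Colour $1$ is absent at every centre, so this edge satisfies every non-$H$ edge ending at a centre.

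In your scheme, the fix amounts to making the repeated colour on star edges differ from the outer-edge colour of the subdivided stars, for example using $3$ instead of $1$. Your choice violates exactly this condition, and it does so in every component. Your middle vertices $x_i$ are not affected: their colour $1$ is unique and absent at every centre.
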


\begin{pf}
We may assume $G$ is nontrivial and connected. If $G$ is a star centered at $Y$, it is enough to colour one of its edges with $1$, another one with $2$, and the remaining edges, if any, with $3$. So we may assume in what follows that $G$ is not a star centered at $Y$.

Let $H$ be a subgraph of $G$ given by Observation~\ref{idk}, with vertex bipartition $X^{\prime} \cup Y$. For every component of $H$ being a star centered at $X$, we colour one of its edges with $1$, another one with $2$, and the remaining edges, if any, with $3$. For every component of $H$ being a subdivided star centered at  
$y \in Y$, we colour one edge incident to $y$ with $3$, all other edges incident to $y$ with $2$, and the remaining edges (not incident to $y$) with $1$.

Then it is easy to see that every vertex in $X^{\prime} \cup Y$ is incident with a uniquely coloured edge 
in the given partial colouring of $G$. It is also straightforward to verify that all coloured edges (i.e., those in $H$) are satisfied. 

Now consider any uncoloured edge $xy \in E$ with $x \in X$ and $y \in Y$. If $x \notin X^{\prime}$, then no coloured edge is incident to $x$, while $y$ is incident with a uniquely coloured edge; hence, $xy$ is satisfied. Thus, we may assume that $x \in X^{\prime}$. By the colouring construction, $x$ is incident with two uniquely coloured edges. Therefore, since $xy$ is uncoloured, it could be unsatisfied only if $y$ is incident with at least two colours. This occurs only when $y$ is a center of a subdivided star in $H$, in which case it is incident with colours $3$ and $2$. However, by the colouring construction, every vertex in $X^{\prime}$, and in particular $x$, is incident with a uniquely coloured edge coloured $1$. Thus, all edges in $G$ are satisfied.
\qed
\end{pf}

Extending the partial edge $3$-colouring given by Observation~\ref{3bi} by colouring all remaining edges with one additional colour yields the following Corollary~\ref{4Bi} for $\chi'_{\rm ocf}(G)$; 
the corresponding result for the closed setting was obtained earlier in~\cite{MPK}. 

\begin{corollary}\label{4Bi}
For every bipartite graph $G$, we have $\chi'_{\rm ocf}(G) \le \chi'_{\rm cf}(G) \le 4$. 
\end{corollary}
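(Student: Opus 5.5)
The first inequality $\chi'_{\rm ocf}(G)\le\chi'_{\rm cf}(G)$ is immediate from the definitions. Every conflict-free colouring in the hybrid sense is also open conflict-free, because a colour unique for $e$ appears on exactly one edge of $E(e)$ (and in fact of $E[e]$). So it remains to show $\chi'_{\rm cf}(G)\le 4$ for a bipartite $G$.

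We may treat each connected component separately, since satisfaction of an edge depends only on its closed neighbourhood, which lies inside one component. A component that is a single edge is an isolated edge, and it imposes no requirement, so we can colour it arbitrarily, say with colour $4$. Isolated vertices carry no edges. Hence we may assume $G$ has no trivial connected components. We fix a vertex bipartition $V=X\cup Y$ and apply Observation~\ref{3bi}. This yields a partial edge colouring $c$ with colours $\{1,2,3\}$ that satisfies every edge of $G$.

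Next we extend $c$ to a total colouring $c'$ by giving every uncoloured edge the new colour $4$. The key step is to check that satisfaction is preserved. Take any edge $e$, and let $\alpha\in\{1,2,3\}$ be a colour unique for $e$ under $c$. Then $\alpha$ appears on some $e'\in E(e)$ and on no other edge of $E[e]$ under $c$. Recolouring only previously uncoloured edges with $4\neq\alpha$ does not create new occurrences of $\alpha$. Hence $\alpha$ is still unique for $e$ under $c'$, and every non-isolated edge remains satisfied. This gives a conflict-free colouring using at most $4$ colours.

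There is no genuine obstacle here; all the combinatorial work is contained in Observation~\ref{3bi}. The only point requiring care is that "satisfied" for a partial colouring refers solely to the presence of a unique colour among coloured edges. Uncoloured edges never interfere, so adding a fresh colour is harmless.
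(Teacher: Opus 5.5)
Your proposal is correct and matches the paper's argument: take the partial $3$-colouring from Observation~\ref{3bi}, give every remaining edge a fourth colour, and note that no new occurrences of an already unique colour are created. Your handling of trivial components, and your remark that the first inequality is immediate from the definitions, are also consistent with the paper.
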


\begin{corollary}\label{4Bi16}
Every bipartite graph $H = (V, F)$ without trivial connected components admits a partial edge $4$-colouring satisfying all edges such that every vertex in $V$ is incident with a uniquely coloured edge.
\end{corollary}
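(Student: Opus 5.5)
The plan is to start from the partial $3$-colouring of Observation~\ref{3bi} and use the fourth colour only to repair the vertices of $X$ left without a uniquely coloured edge. We may assume $H$ is connected and nontrivial, and fix a vertex bipartition $V=X\cup Y$. Observation~\ref{3bi} gives a partial edge colouring $c$ with colours $1,2,3$ in which all edges are satisfied, every $y\in Y$ is incident with a uniquely coloured edge, and every $x\in X$ with at least one coloured incident edge is also incident with one. Let $X_0\subseteq X$ be the set of vertices with no coloured incident edge. For each $x\in X_0$ we choose one edge $xy_x\in F$, which exists since $H$ has no isolated vertices, and colour it $4$. No other edge gets colour $4$.

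Next we check that every vertex has a uniquely coloured edge. Each $x\in X_0$ now has exactly one coloured edge, coloured $4$, so it is unique at $x$. Vertices of $X\setminus X_0$ are not touched by colour $4$, since colour-$4$ edges meet $X$ only in $X_0$. Every $y\in Y$ keeps its previously unique colour $\alpha\in\{1,2,3\}$, because we only added colour $4$. A vertex $y$ may receive several colour-$4$ edges, but that is harmless.

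Finally we check that all edges stay satisfied. The key remark is that adding edges of a brand-new colour never destroys the uniqueness of an old colour within a closed neighbourhood. Hence every edge satisfied by a colour from $\{1,2,3\}$ under $c$ remains satisfied, and this covers all edges. As a sanity check for the newly coloured edges $xy_x$: the unique colour $\alpha$ at $y_x$ sits on an edge $e'\neq xy_x$, because $xy_x$ was uncoloured under $c$. Moreover $\alpha$ does not occur at $x$, whose only coloured edge has colour $4$. So $e'$ satisfies $xy_x$.

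There is no serious obstacle here. The only point needing care is the definition of a unique colour: it must lie on an edge of the open neighbourhood and appear nowhere else in the closed neighbourhood. The argument above respects this, since it relies only on the fact that colour $4$ is new.
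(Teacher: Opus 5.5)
Your proposal is correct and is essentially the paper's proof. The paper also applies Observation~\ref{3bi} and then colours one arbitrary edge with the fresh colour $4$ at each vertex of $X$ that has no coloured incident edge. Your remark that adding a brand-new colour cannot destroy any existing uniqueness is precisely the justification the paper leaves implicit.
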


\begin{pf}
Consider a partial edge colouring of $H$ with colours $1, 2, 3$ given by Observation~\ref{3bi}.
This colouring ensures that all  edges of $H$ are satisfied. 
Moreover, every vertex in $Y$, as well as every vertex in $X$ that is incident with at least one coloured edge, is incident with a uniquely coloured edge. Let us denote the set of such vertices in $X$ by $X'$, and set $X^{\prime\prime} = X \setminus X^{\prime}$.

For every vertex $x \in X^{\prime\prime}$, choose an arbitrary single edge joining it to $Y$ in $H$ and colour it with $4$. Consequently, all edges in $H$ remain satisfied, while every vertex in $V$ becomes incident with a uniquely coloured edge. 
\qed
\end{pf}

\begin{observation}\label{16m}
Every graph $G = (V, E)$ 
which can be edge-decomposed into a spanning bipartite subgraph $H'$ without trivial connected components 
and a subgraph whose edges form a matching $M'$ admits a partial edge $16$-colouring satisfying all edges of $G$.
\end{observation}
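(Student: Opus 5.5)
The plan is to take the partial $4$-colouring of $H'$ given by Corollary~\ref{4Bi16}, leave all edges of $M'$ uncoloured, and refine each colour by a pair of bits, giving $4\cdot 2\cdot 2=16$ colours. Refinement costs nothing on $H'$, because a colour that appears exactly once in a neighbourhood still appears exactly once after its class is split. So every edge of $H'$ stays satisfied, and every vertex keeps a uniquely coloured incident edge $e_w$ within $E(w)$. The bits have to be chosen so that every matching edge $uv\in M'$ also becomes satisfied. Note that $uv\notin H'$, so $uv$ is uncoloured.

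Fix the vertex bipartition $V=X\cup Y$ of $H'$; it is spanning, so every vertex lies on a side. Colour each coloured edge $xy$ of $H'$, with $x\in X$ and $y\in Y$, by $(a(xy),\psi(x),\psi(y))$. Here $a$ is the colour from Corollary~\ref{4Bi16} and $\psi\colon V\to\{1,2\}$ is still to be chosen. A matching edge $uv$ is satisfied by $e_u$ as soon as no coloured edge at $v$ receives the refined colour of $e_u$. This holds because $e_u$ is already unique within $E(u)$ and the edge $uv$ itself carries no colour. I then treat the three types of matching edges separately.
If $u,v\in X$, every coloured edge at $u$ has second coordinate $\psi(u)$ and every coloured edge at $v$ has second coordinate $\psi(v)$. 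So it suffices to have $\psi(u)\ne\psi(v)$, and symmetrically for $u,v\in Y$ with the third coordinate. These constraints form a matching, so they are trivially $2$-colourable.

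The mixed case $u\in X$, $v\in Y$ is the delicate one, and I expect it to be the main obstacle. All coloured edges at $u$ have second coordinate $\psi(u)$. The edge $e_v=x_1v$ has second coordinate $\psi(x_1)$. Hence imposing $\psi(u)\ne\psi(x_1)$ makes $e_v$ unique in $E[uv]$. Alternatively, with $e_u=uy_0$, imposing $\psi(v)\ne\psi(y_0)$ on the $Y$ side makes $e_u$ unique. Each mixed matching edge therefore contributes one inequality, placed either among the $X$-vertices or among the $Y$-vertices, and we may choose which side. Each vertex is in at most one matching edge, so in the resulting constraint graph every vertex is the origin of at most one constraint. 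The constraint graph on each side is thus a pseudoforest, which is $3$-colourable but possibly not $2$-colourable.

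To get down to $2$ values per side, I would first exploit the free choice of side for each mixed edge. Whenever an odd cycle arises on one side, I would switch one of its mixed constraints to the other side, arguing that this does not create a new odd cycle there. If that fails, I would use the freedom in Corollary~\ref{4Bi16}: the colour-$4$ edges and the choice of which uniquely coloured edge serves as $e_w$ can be altered to break odd cycles. As a fallback, a constant-size colour set still follows with $3$ values per side, although that gives more than $16$ colours.
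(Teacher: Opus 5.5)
You handle the refinement step and the same-side matching edges correctly. However, the mixed case $u\in X$, $v\in Y$ is never actually resolved, and this is a genuine gap. Your own analysis only shows that the constraint graphs are pseudoforests, and these can contain odd cycles. For instance, three mixed edges $u_iv_i$ with $e_{v_i}=u_{i+1}v_i$ (indices modulo $3$) produce a triangle of constraints on the $X$ side if you place all three there. You give no argument that moving constraints to the $Y$ side always destroys odd cycles without creating new ones there. Nor do you say concretely how re-choosing the colour-$4$ edges or the edges $e_w$ would help. Your fallback with three values per side gives $4\cdot 3\cdot 3=36$ colours rather than $16$. So the proposal does not prove the statement as written.

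The missing idea is that you never need to treat a mixed matching edge as a matching edge at all. Since it joins $X$ to $Y$, adding it to $H'$ keeps the graph spanning and bipartite with the same bipartition. It also cannot create a trivial component, because it only adds an edge to, or merges, components that already have at least two edges. So let $H''$ be $H'$ together with all mixed edges of $M'$. The paper does essentially this, taking a maximal spanning bipartite subgraph $H\supseteq H'$ of $G$ without trivial components; by maximality, every edge of $E\setminus E(H)$ lies inside one side of the bipartition of $H$. Apply Corollary~\ref{4Bi16} to $H''$ instead of $H'$. The former mixed edges are then satisfied like every other edge of the bipartite part, and the remaining matching edges are uncoloured, so they do not interfere. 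Your refinement $(a(xy),\psi(x),\psi(y))$, with $\psi$ a proper $2$-colouring of the remaining matching, then handles the same-side edges exactly as you describe, using $4\cdot 2\cdot 2=16$ colours.
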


\begin{pf}
Let $H = (V, F)$ be a maximal spanning bipartite subgraph of $G$ without trivial connected components that contains the edges of $H'$, and let $V = X \cup Y$ be its vertex bipartition. 
Set $M = E \setminus F$. Since $M\subseteq M'$, the edges of $M$ form a matching.
Moreover, by the maximality of $H$, the set $M$  contains no edge $xy$ with $x \in X$ and $y \in Y$. 

Let $c: E^{\prime} \to [4]$, where $E^{\prime} \subseteq F$, be a partial edge $4$-colouring of $H$ guaranteed by
Corollary~\ref{4Bi16}. 
Observe that any edges of $G$ that are unsatisfied by $c$, if such exist, must belong to $M$.
Fix also an auxiliary proper vertex colouring $\omega: V \to [2]$ of the graph $(V, M)$.
We now define a new partial edge $16$-colouring $C: E^{\prime} \to [4] \times [2] \times [2]$ as follows. For every 
edge $xy \in E^{\prime}$ with $x \in X$ and $y \in Y$, set $$C(xy) := (c(xy), \omega(x), \omega(y)).$$

Note that every edge $e$ in the bipartite subgraph $H$ remains satisfied under the colouring $C$, as its first coordinate $c(e)$ inherently guarantees a unique colour in the neighbourhood of $e$.

Now consider any edge $xx'\in M$ where $x,x'\in X$.
By Corollary~\ref{4Bi16}, there is an edge $e$ in $E'(x)$ whose colour $c(e)$ is unique within $E'(x)$. 
It is straightforward to verify that $C(e)$ is then unique in the neighbourhood of $xx'$: its first coordinate ensures uniqueness among edges incident with $x$, while the second coordinate distinguishes it from all colours  $C(e')$ with $e'\in E'(x')$. 

An analogous argument, using the third coordinate, shows that every edge in $M$ with both endpoints in 
$Y$ is also satisfied. Consequently, all edges of $G$ are satisfied by $C$.
\qed
\end{pf}

\begin{theorem}\label{3logtemp}
Every graph $G$ with chromatic number at most $\alpha$ admits a partial edge colouring using at most $3\lceil \log_2 \alpha \rceil$ colours such that all edges of $G$ are satisfied, except possibly for a set $M$ of uncoloured edges forming a matching.
\end{theorem}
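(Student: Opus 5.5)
We argue by induction on $k=\lceil \log_2 \alpha\rceil$, using a separate palette of three fresh colours at each level of a binary splitting of the colour classes. For $k=0$ the graph is edgeless and there is nothing to prove. For $k\ge 1$, fix a proper colouring of $G$ with at most $2^k$ colours. Split the colour classes into two groups of at most $2^{k-1}$ classes each, and let $A$ and $B$ be the unions of the classes in the two groups. Then $\chi(G[A]),\chi(G[B])\le 2^{k-1}$.

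We apply the induction hypothesis to $G[A]$ and to $G[B]$ with one common palette $P$ of $3(k-1)$ colours. This is allowed because no edge of $G[A]$ is adjacent to an edge of $G[B]$. It yields exceptional matchings $M_A\subseteq E[A]$ and $M_B\subseteq E[B]$, whose union is again a matching. For the bipartite graph $G[A,B]$ we take three new colours, disjoint from $P$. On the union $H$ of its nontrivial components we use the partial colouring of Observation~\ref{3bi}, with the bipartition $X\cup Y$ chosen as discussed below.

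The disjointness of the palettes is what makes the combination work. A colour that is unique for an edge $e$ within one palette stays unique in the whole colouring, since colours from other palettes can never coincide with it. Consequently every edge of $G[A]\cup G[B]$ outside $M_A\cup M_B$ remains satisfied, and every edge of $H$ is satisfied by Observation~\ref{3bi}.

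This leaves two kinds of problematic edges: the edges of $M_A\cup M_B$, and the edges of the trivial (single-edge) components of $G[A,B]$. Together these need not form a matching, so we must satisfy enough of them to leave only a matching. We handle them using the uniqueness guarantees of Observation~\ref{3bi}.
\begin{itemize}
\item Every vertex of $Y$ is incident with a uniquely coloured edge of the new palette. Every vertex of $X$ incident with a coloured edge is as well.
\item A recursive matching edge $uv\in M_A$ is therefore satisfied as soon as exactly one of $u,v$ carries a new-palette colour, or both do with distinct unique colours. We try to arrange this by choosing $Y$ (that is, which side of each component of $H$ plays the role of $Y$) suitably, and by tweaking which edge receives the singleton colours $1$ and $2$ in the star components.
\item A trivial component $uv$ of $G[A,B]$, with $u\in A$ and $v\in B$, has no other bipartite neighbours. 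It is satisfied whenever $u$ or $v$ is incident with an edge of its own $P$-colouring whose colour is unique at that vertex and absent at the other endpoint. Since $u$ and $v$ lie on different sides, their $P$-colourings were produced independently, so we may need a strengthened induction hypothesis. A natural candidate is that every non-isolated vertex outside $V(M)$ is incident with an edge whose colour is unique at it.
\end{itemize}

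The main obstacle is exactly this bookkeeping at the end. Three colours per level are tight, so we cannot afford a fourth colour as in Corollary~\ref{4Bi16}. We must therefore show that the leftover unsatisfied edges, namely recursive matching edges not rescued by the new palette together with isolated bipartite edges, can be arranged to form a single matching. The first thing to try is to strengthen the statement of Theorem~\ref{3logtemp} inductively: require in addition that every non-isolated vertex not covered by $M$ is incident with a uniquely coloured edge. Then, at each level, choose the roles of $X$ and $Y$ in Observation~\ref{3bi} so that vertices of $M_A\cup M_B$ land in $Y$ wherever this is possible. This should push all remaining conflicts onto a set of pairwise disjoint edges.
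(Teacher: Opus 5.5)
\textbf{There is a genuine gap.} Your proposal does not prove the statement. The step you yourself flag as ``the main obstacle'' is left open, and the suggested remedy is only a hope (``this should push all remaining conflicts onto a set of pairwise disjoint edges'').

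With an arbitrary split of the colour classes into $A$ and $B$, an isolated edge $uv$ of $G[A,B]$ gets nothing from the new palette. Since $G[A]$ and $G[B]$ share the palette $P$, nothing prevents $uv$ from being unsatisfied while also touching an edge of $M_A$ at $u$ and an edge of $M_B$ at $v$. So the leftover set really need not be a matching.

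Your proposed strengthened hypothesis does not rescue this:
\begin{itemize}
\item Observation~\ref{3bi} only guarantees a uniquely coloured edge at vertices of $X$ that are incident with a coloured edge. The paper needs a fourth colour in Corollary~\ref{4Bi16} precisely to obtain this for all vertices, so maintaining it with three colours per level is unjustified.
\item Even if it held, a colour unique at $u$ could still appear at $v$, because of the shared palette. So it would not give you the ``absent at the other endpoint'' condition you need.
\end{itemize}

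The missing idea is an \emph{extremal choice of the split}, which makes all of this bookkeeping unnecessary.
\begin{itemize}
\item Work componentwise; a component consisting of a single edge simply goes into $M$. So assume $G$ is connected with at least two edges.
\item Among all proper colourings of $G$ with colours in $[2^k]$, with $A$ the vertices coloured from the first half and $B$ the rest, choose one maximising $|E[A,B]|$.
\item Suppose $xy$, with $x\in A$ and $y\in B$, were an isolated edge of $G[A,B]$. By connectivity some other edge meets it, say $xx'\in E[A]$ or $yy'\in E[B]$.
\item Swap the colours of $x$ and $y$. This keeps the colouring proper: $y$ is the only neighbour of $x$ in $B$ and vice versa, and all other neighbours of $x$ (resp.\ $y$) carry colours from the other half. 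It also increases $|E[A,B]|$ by $d_A(x)+d_B(y)\ge 1$, a contradiction.
\end{itemize}

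Hence $G[A,B]$ has no trivial components. Observation~\ref{3bi} then applies to all of $E[A,B]$ with three fresh colours, so every edge of $E[A,B]$ is satisfied. The only possibly unsatisfied edges are those of $M_A\cup M_B$, which form a matching automatically since $A\cap B=\emptyset$. This is exactly the paper's argument. The rest of your setup (the shared palette $P$ for $G[A]$ and $G[B]$, and disjoint palettes across levels) is correct and matches it.
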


\begin{pf}
We prove the theorem by induction on $\alpha$. The assertion is trivial for $\alpha=1$, while for $\alpha=2$ it follows from Observation \ref{3bi}. Thus, assume that $\alpha \ge 3$ and $G$ is a connected graph with at least two edges.

Set $\beta = 2^{\lceil \log_2 \alpha \rceil}$; then $\lceil \log_2 \alpha \rceil = \log_2 \beta$. Colour the vertices of $G$ properly with colours in $[\beta]$, allowing some colour classes to be empty. Let $X$ be the set of vertices coloured with colours in $[\beta/2]$, and let $Y=V\smallsetminus X$.  
Choose such a colouring so as to maximise the number of edges between $X$ and $Y$, i.e., $|E[X,Y]|$.

Observe that the bipartite graph $G[X,Y]$ cannot contain an isolated edge. Indeed, suppose that there exists an edge $xy\in E[X,Y]$ with $x\in X$ and $y\in Y$ that is isolated in $G[X,Y]$.
Since $G$ is connected and has at least two edges, this edge must be adjacent to another edge, say  $xx^{\prime} \in E[X]$.
However, swapping the colours of $x$ and $y$ preserves the properness of the colouring while increasing
$|E[X,Y]|$, contradicting the maximality of our choice.

Since the chromatic numbers of $G[X]$ and $G[Y]$ are at most $\beta/2 < \alpha$, we may apply the induction hypothesis to both subgraphs.
Thus, we obtain partial edge colourings of $G[X]$ and $G[Y]$ using the same set of $3 \log_2(\beta/2) = 3 \log_2 \beta  -  3$ colours, such that all edges in $E[X] \cup E[Y]$ are satisfied, except possibly for a set $M$ of uncoloured edges forming a matching.
Finally, by Observation~\ref{3bi}, it suffices to use $3$ additional colours on edges in $E[X,Y]$ to ensure that all these edges are also satisfied.
\qed
\end{pf}

\begin{theorem}\label{3log}
Every graph $G = (V, E)$ with chromatic number at most $\alpha$ admits a partial edge colouring with $3\lceil \log_2 \alpha \rceil + 16$ colours satisfying all non-isolated edges of $G$.
\end{theorem}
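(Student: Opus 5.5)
The plan is to start from the partial colouring $c$ given by Theorem~\ref{3logtemp}. It uses a palette $P$ of $3\lceil \log_2\alpha\rceil$ colours, and every edge outside an uncoloured matching $M$ is satisfied. We then handle the non-isolated edges of $M$ with a fresh palette $Q$ of $16$ colours, $Q\cap P=\emptyset$, using Observation~\ref{16m}.

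The basic monotonicity fact is the following. If an edge $f$ is satisfied by a colour $\gamma$ appearing on $f'\in E(f)$, then colouring further, \emph{previously uncoloured}, edges with colours from $Q$ keeps $f$ satisfied, since $\gamma\notin Q$. So as long as we only colour edges left uncoloured by $c$, nothing already satisfied breaks. The only task is to satisfy each non-isolated $uv\in M$.

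To apply Observation~\ref{16m}, we build an auxiliary graph $G^*$. Its vertex set is $V(M^*)$, where $M^*\subseteq M$ is the set of non-isolated matching edges. For each endpoint $w$ of an edge of $M^*$ with $d_G(w)\ge 2$ we pick one edge $s_w\in E(w)\setminus M$, and we let $S$ be the set of these edges. Then $G^*=M^*\cup S$, and we want to present it as a bipartite graph $H'$ without trivial components plus a matching. The natural choice is to take $H'$ to consist of the stars formed by the $s_w$ (after removing duplicates), augmented where needed by edges of $M^*$ to kill trivial components, with the remaining $M^*$-edges forming the matching $M'$. Observation~\ref{16m} applied to $G^*$ then yields a partial $Q$-colouring satisfying every edge of $G^*$, in particular all of $M^*$. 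Note also that the colours that satisfy an edge $uv\in M^*$ can be taken from edges of $G^*$ at $u$ or $v$. The uniqueness argument in the proof of Observation~\ref{16m} is local, through the first coordinate from Corollary~\ref{4Bi16} and the $\omega$-coordinates. Together with the fact that edges of $G\setminus G^*$ carry only $P$-colours, this means such a colour stays unique in $E[uv]$ in the whole of $G$.

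The main obstacle is that the edges $s_w$ may already be coloured by $c$, and overwriting them with $Q$-colours could destroy the satisfaction of other edges that relied on the old colour of $s_w$. My first attempt is to choose $s_w$ among edges uncoloured by $c$ whenever possible, and otherwise to argue that the overwritten edge keeps a unique role. In the colouring of Corollary~\ref{4Bi16}, each vertex of $G^*$ is incident with a uniquely coloured edge. So an edge $f$ that was satisfied through $s_w$ at the vertex $w$ still sees, at $w$, the $Q$-colour of some edge of $G^*$ that is unique among $Q$-colours at $w$. It then remains to separate it from $Q$-colours at the other endpoint of $f$, via the $\omega$-coordinates as in Observation~\ref{16m}. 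If this local re-check fails, the fallback is to go back into the induction of Theorem~\ref{3logtemp}. There we would choose the bipartite colourings from Observation~\ref{3bi} so that every matching edge of $M$ has an uncoloured adjacent edge, or is itself adjacent to a star component. That would make $S$ avoid coloured edges altogether, and then the monotonicity fact closes the argument with $3\lceil\log_2\alpha\rceil+16$ colours.
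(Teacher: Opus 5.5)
There is a genuine gap, and it is exactly the obstacle you flag: you never show how to satisfy the edges of $M^*$ without recolouring edges that $c$ has already coloured, and neither of your remedies works.

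First, overwriting cannot be avoided in your scheme. Used as a black box, Theorem~\ref{3logtemp} guarantees you no uncoloured edges outside $M$ at all. Even its actual construction can leave an unsatisfied $uv\in M$ whose whole open neighbourhood $E(uv)$ is coloured, e.g.\ when $u$ and $v$ are centres of two-edge star components in Observation~\ref{3bi}. Colouring $uv$ itself is useless, since a unique colour must lie on an edge of $E(uv)$.

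Second, your local re-check does not justify the overwriting. Suppose $f=wz$ was satisfied only through the old colour of $s_w$, and $z$ is also a vertex of $G^*$. The $\omega$-coordinates then give no separation, because $\omega$ is a proper colouring of the matching only. The vertices $w$ and $z$ may lie on opposite sides of the bipartition, or on the same side with $\omega(w)=\omega(z)$. In either case the $Q$-colour unique at $w$ may reappear at $z$, leaving $f$ unsatisfied.

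Third, the fallback of re-engineering the induction in Theorem~\ref{3logtemp} is only announced. You give no modified version of Observation~\ref{3bi} that would deliver it. (Less importantly, the edges $s_w$ need not form stars or even a bipartite graph: three endpoints of distinct matching edges can choose the three edges of a triangle. Also, $G^*$ must contain the far endpoints of the $s_w$. So the decomposition required by Observation~\ref{16m} needs more care.)

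The missing idea is to reserve uncoloured edges \emph{before} calling Theorem~\ref{3logtemp}. For connected $G$ with at least two edges, fix a spanning tree $T$ and apply Theorem~\ref{3logtemp} to $G-E(T)$, whose chromatic number is still at most $\alpha$. Then every edge of $T\cup M$ is uncoloured, and $T\cup M$ is a spanning bipartite graph without trivial components plus a matching. Observation~\ref{16m} therefore satisfies all its edges with $16$ fresh colours placed only on previously uncoloured edges. Your monotonicity observation, which is correct, then shows that nothing else breaks. This is precisely the paper's proof.
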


\begin{pf}
We may assume $G$ is connected and nontrivial. Fix any spanning tree $T = (V, F)$ of $G$, and let $G^{\prime}$ be the graph obtained from $G$ by removing the edges of $T$. Clearly, $\chi(G^{\prime}) \le \alpha$.
By Theorem \ref{3logtemp}, we may partially colour $G^{\prime}$ with no more than $3\lceil \log_2 \alpha \rceil$ colours in order to satisfy all edges of $G^{\prime}$ except possibly for a set $M$ of uncoloured edges forming a matching.

Consider now the graph $G^{\prime\prime} := (V, F \cup M)$. This graph satisfies the assumptions of Observation~\ref{16m}, and hence we may use a new set of $16$ colours to colour some of its edges so that all edges of $G''$ get satisfied. Note that this does not affect the edges of $G^{\prime} \setminus M$, which remain satisfied as well.
\qed
\end{pf}

As mentioned, Theorem~\ref{3log} shall serve as one of the key tools in our probabilistic approach below. However, similarly to Observation~\ref{3bi}, it also yields the following deterministic consequence, which may be of independent interest. See~\cite{MPK}  for a slightly stronger result in the closed neighbourhood setting.
\begin{corollary} \label{3logfull}
For every graph $G$, we have $\chi_{\rm ocf}^{\prime}(G) \leq \chi_{\rm cf}^{\prime}(G) \leq 3 \lceil\log_2 \chi(G)\rceil + 17 < 3 \log_2 \chi(G)+20$. 
\end{corollary}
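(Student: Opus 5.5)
The plan is to derive the corollary directly from Theorem~\ref{3log} by adding one colour. The inequality $\chi'_{\rm ocf}(G)\le\chi'_{\rm cf}(G)$ holds by definition. A conflict-free colouring in our hybrid sense makes every non-isolated edge satisfied. For every non-isolated edge $e$, the open neighbourhood $E(e)$ is nonempty and contains an edge whose colour is unique even within $E(e)$. So such a colouring is open conflict-free.

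For the main inequality, set $\alpha=\chi(G)$ and apply Theorem~\ref{3log}. This gives a partial edge colouring $c$ with $3\lceil\log_2\alpha\rceil+16$ colours satisfying every non-isolated edge of $G$. Next, colour every uncoloured edge of $G$ with one new colour $\gamma$, obtaining a full colouring with $3\lceil\log_2\alpha\rceil+17$ colours.

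The only point to check is that the extension preserves satisfaction. If a non-isolated edge $e$ was satisfied by some colour $\beta$ of $c$, then $\beta\neq\gamma$. The new colour only appears on previously uncoloured edges, so the number of occurrences of $\beta$ in $E[e]$ is unchanged. Hence $\beta$ still appears on exactly one edge $e'\in E(e)$ and on no other edge of $E[e]$, and $e$ remains satisfied. Isolated edges impose no condition. So the full colouring is conflict-free in the sense of $\chi'_{\rm cf}$.

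The final strict inequality is arithmetic: $\lceil\log_2\chi(G)\rceil<\log_2\chi(G)+1$, so $3\lceil\log_2\chi(G)\rceil+17<3\log_2\chi(G)+20$. There is no real obstacle here; the delicate part, satisfying all edges including the leftover matching, was already handled in Theorem~\ref{3log} through Observation~\ref{16m}.
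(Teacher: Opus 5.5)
Your proof is correct and matches the paper's argument: apply Theorem~\ref{3log} with $\alpha=\chi(G)$ and colour all remaining edges with one fresh colour, exactly as Corollary~\ref{4Bi} is obtained from Observation~\ref{3bi}; the inequality $\chi'_{\rm ocf}\le\chi'_{\rm cf}$ follows directly from the definition of a satisfied edge. Your check that the fresh colour cannot destroy the witnessing unique colour, and the computation $\lceil\log_2\chi(G)\rceil<\log_2\chi(G)+1$, are the details the paper leaves implicit.
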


\section{Proof of Theorem~\ref{GoodAsymptCFE-thm}}

\subsection{General overview of the approach}

Fix $\varepsilon>0$ and let $G=(V,E)$ be a graph with maximum degree $\Delta$. We may assume that $G$ contains no isolated edges. Whenever necessary we shall also assume that $\Delta$ is large enough;
in particular, we do not attempt to optimise or explicitly determine the corresponding threshold $\Delta_\varepsilon$.

Our vague primary idea relies on edge-decomposing our graph into two subgraphs $H$ and $D$. 
The subgraph $D$ shall have upper-bounded chromatic number, which shall allow us to apply Theorem~\ref{3log} to partially colour its edges using a small number of $o(\log_2\Delta)$ colours, thereby satisfying all of its edges except for a small exceptional set.

In turn, the subgraph $H$ shall be required to have sufficiently large minimum degree relative to its maximum degree, which shall be convenient for the application of the probabilistic method.
The corresponding central part of the argument shall require the bulk of about $\log_2\Delta$ colours to satisfy 
the majority of the edges incident with any given vertex $v$, namely all except at most   $2\log_2\Delta$ of these. 
Consequently, the remaining unsatisfied edges shall span a graph of small maximum degree, which can be handled using an additional $o(\log_2\Delta)$ of colours. 
Throughout the construction, we shall use disjoint sets of colours at different stages in order to avoid unwanted interactions between the respective colourings.

One of the issues we shall need to additionally resolve concerns several problematic matchings that may emerge in the course of the argument and whose edges may remain unsatisfied after the main stage.
To address this, we shall initially additionally extract a forest $F$ with bounded degrees from $G$. In the final stage of the proof, we shall then demonstrate that $F$, along with the mentioned matchings, satisfies the assumptions of Observation~\ref{16m}, which allows us to use at most 16 additional colours to complete a partial colouring that satisfies all edges of $G$. The details follow.

\subsection{Initial decomposition}

Set 
\begin{equation}\label{dValue}
d:=\left\lfloor 10(\log_2\Delta)^3\right\rfloor.
\end{equation}
We first apply Observation~\ref{deg_partition} to $G$, thus edge-decomposing it into its $d$-core $H'$, with minimum degree at least $d$, and a graph $D'$ of degeneracy at most $d-1$. 
Next, we apply Corollary~\ref{EqualDecompositionCorollary} to $H'$ to obtain its temporary auxiliary edge-decomposition into $H'_1,H'_2,H'_3$ with $d_{H'_i}(v)\geq d/3-2$ for every $v\in V(H')$, i.e. with $\delta(H'_i)\geq d/3-2$ for $i\in [3]$.  We then fix any spanning forest $F'$ of $H'_3$, which by degree conditions above is a spanning forest of $H'$ without isolated edges. Define $H:=H'-E(F')=(V(H'),E(H')-E(F'))$.
Let $J$ be the matching consisting of all isolated edges of $D'$, if any.
Since $G$ has no isolated edges, every edge in $J$ has exactly one endpoint in $V(H')$. 
Consequently, the graph $F:=F'+J=(V(H')\cup V(J),E(F')\cup J)$ is a forest. Finally, set $D:=D'-J$. 
Thus, we obtain an edge decomposition of  of  $G$ into $H$, $D$ and $F$ such that:
\begin{description}
\item[(i)] $\delta(H)\geq 0.65d$  
and $\Delta(H)\leq \Delta$; 
\item[(ii)] $D$ has no trivial components and is $(d-1)$-degenerate; in particular, $\chi(D)\leq d$;
\item[(iii)] $F$ is a forest without trivial components and $V(H)\subseteq V(F)$.
\end{description}

\subsection{Outline of the random colouring procedure}

Suppose that we randomly partition $V(H)$ into three subsets $U_1, U_2$, and $U_3$ so that each of $U_1$ and $U_2$ contains roughly half of the vertices of $H$, while $U_3$ is relatively small, yet large enough to ensure that every vertex $v \in U_2$ 
has some edges joining it with $U_3$. 
For each such vertex $v$ fix one such edge and denote it by $e_v$. 
We then colour all edges $e_v$, for $v \in U_2$, 
with a single colour, say $\alpha_1$. In this way, every edge between $U_1$ and $U_2$ becomes satisfied by $\alpha_1$. By randomness, we may expect that this procedure satisfies roughly half of the edges incident with each vertex $v$ in the set $U_1 \cup U_2$, which comprise the vast majority of vertices of $H$. 

Now suppose, hypothetically, that this procedure could be repeated independently about
$\log_2 \Delta$ times, each time generating a new partition of $V(H)$ and using a new colour $\alpha_i$. 
Since in each round approximately half of the edges incident with a typical vertex $v$ gain a unique colour in their neighbourhoods, after $\log_2 \Delta$ rounds we would expect that only a constant number of edges incident with $v$, roughly $(1/2)^{\log_2 \Delta} d(v)$, remain unsatisfied.
This suggests that the number of such remaining edges would be sufficiently small for our approach to succeed.
There are, however, several issues that must be resolved.

First, in order to complete the colouring after the process outlined above, all coloured edges must already be satisfied. To achieve this, we shall need to use four additional colours within each set $U_3$. However, we cannot afford to introduce five new colours in every round, as this would result in a total of roughly $5 \log_2 \Delta$ colours. To overcome this difficulty, while the sets $U_1$ and $U_2$ shall be chosen anew in each round, the set $U_3$, together with the four colours used within it, shall remain fixed over 
blocks of $k$ consecutive rounds, where $k$ shall be chosen to be a sufficiently large constant.

Second, we have not yet ensured that the sets of edges coloured in different rounds are disjoint. 
To address this, we first decompose $H$ into approximately  $\log_2 \Delta/k$ edge-disjoint subgraphs $H_i = (V(H), E_i)$. Then, in each block of $k$ consecutive rounds, we select edges to be coloured from a distinct set $E_i$. 
To make the argument work, we shall need to ensure that both the partitions and the decompositions satisfy certain edge-distribution balancing conditions; the details are provided below.

\subsection{Further decomposition and random vertex partitions}

Set
\begin{equation}\label{ksValue}
k = \left\lceil \frac{5}{\varepsilon} \right\rceil,~~~~ s = \left\lceil \frac{\log_2 \Delta}{k} \right\rceil,~~~~{\rm and}~~~~ U=V(H).
\end{equation}
Note that $ks\leq \log_2\Delta+C(\varepsilon)$, where $C(\varepsilon)$ is a constant depending only on $\varepsilon$.

By Corollary~\ref{EqualDecompositionCorollary}, we may edge-decompose $H$ into $s$ spanning subgraphs $H_i=(U,E_i)$, $i\in [s]$, such that for every vertex $v\in U$ and each $i\in[s]$, by~(\ref{dValue}), (i) and~(\ref{ksValue}), we have: 
\begin{equation}\label{dHi}
d_{H_i}(v)\geq \frac{d_{H}(v)}{s}-2\geq 6(\log_2\Delta)^2.
\end{equation}

To define the random partitions, for all vertices $v\in U$, we introduce independent random variables $X_{v,i,j}$, for $i\in [s]$ and $j \in [k]$, given by:
\begin{equation}\label{Zdef}
X_{v,i,j}=\begin{cases}
1 & \text{with probability } 1/2, \\ 
2 & \text{with probability } 1/2,
\end{cases}
\end{equation} 
and additional independent random variables $Z_v$ taking values in $[s]$ uniformly at random.
These variables define, for every $i\in [s]$ and $j \in [k]$, a partition of $U$ into three sets:
\begin{align*}
 U_1^{(i,j)} &=\{v\in U: X_{v,i,j}=1 ~\text{ and }~ Z_v \neq i\},\\
 U_2^{(i,j)} &=\{v\in U: X_{v,i,j}=2 ~\text{ and }~ Z_v \neq i\},\\
 U_3^{(i,j)} &=U_3^i= \{v\in U: Z_v = i\}.
\end{align*}
Note that $U_3^{(i,j)}$ depends only on $Z_v$ and hence is the same across all $k$ partitions corresponding to a fixed $H_i$, thus we may abbreviate its notation to $U_3^i$.

To carry out the colouring construction, we shall require that every vertex has sufficiently many edges joining it with $U_3^i$ within each $H_i$.
For every vertex $v\in U$ and $i\in[s]$, we thus set $N_{v,i}=|N_{H_i}(v)\cap U_3^i|$ and denote the following undesired event:
$$Q_{v,i}:~~N_{v,i} <\log_2\Delta.$$ 
By~(\ref{ksValue}) and~(\ref{dHi}),
$$\mathbb{E}\left(N_{v,i}\right)\geq \frac{d_{H_i}(v)}{s} \geq 5\log_2\Delta.$$
Since the variables $Z_u$, for $u\in N_{H_i}(v)$, are independent, the Chernoff Bound (\ref{CBL}) yields:
\begin{equation}\label{Qvi-bound}
\mathbb{P}\left(Q_{v,i}\right)
\leq \exp\left(-\frac{\left(4\log_2\Delta\right)^2}{10\log_2\Delta}\right)
= \Delta^{-\frac{1.6}{\ln 2}}
< \Delta^{-2.2}.
\end{equation}

Next, we bound the probability of another undesirable event: that a given vertex is incident with many edges that never cross between $U_1^{(i,j)}$ and $U_2^{(i,j)}$, and hence may remain unsatisfied throughout the entire process.
For any given vertex $v\in U$, its neighbour $u\in N_H(v)$ and arbitrary $i\in[s]$ and $j\in [k]$, let us denote the event:
$$C_{vu,i,j}:~~\left(v\in U_1^{(i,j)} \text{ and } u\in U_2^{(i,j)}\right) \quad \text{or} \quad \left(v\in U_2^{(i,j)} \text{ and } u\in U_1^{(i,j)}\right).$$
If this event holds, we say $vu$ is in a \emph{good configuration} in partition $j$ of subgraph $H_i$.
Otherwise, $vu$ is in a \emph{bad configuration} in this partition.
Note this notation applies to any edge $uv$ of $H$ (not only to edges in the given $H_i$).
Denote further an event indicating that $vu$ is in bad configurations in all partitions:
\begin{equation}\label{Cvu-definition}
C_{vu}:~~ \bigwedge_{i\in[s],j\in[k]}\overline{C_{vu,i,j}}.
\end{equation}
We say that $vu$ \emph{is always bad} if this event occurs.

Observe that if $v\in U_3^i$ or $u\in U_3^i$, i.e. when $Z_v=i$ or $Z_u=i$, then $C_{vu,i,j}$ cannot occur.
Otherwise, $C_{vu,i,j}$ holds with probability $1/2$. 

Notice moreover that regardless of the values of $Z_v$ and $X_{v,i',j'}$ with $i'\in[s]$, $j'\in[k]$ (i.e. for any fixed evaluations of these), the probability that $Z_u=Z_v$ equals exactly $1/s$, whereas for any $i\neq Z_v,Z_u$, the probability that $C_{vu,i,j}$ holds remains exactly $1/2$ for every $j\in[k]$. Therefore,
\begin{equation}\label{Cuv-estimate}
\mathbb{P}\left(C_{vu}\right) = \frac{1}{s} \left(\frac{1}{2}\right)^{(s-1)k} 
+ \frac{s-1}{s} \left(\frac{1}{2}\right)^{(s-2)k} 
\leq \left(\frac{1}{2}\right)^{(s-2)k} \leq \frac{2^{2k}}{\Delta},
\end{equation}
and for the same reasons, the events $C_{uv}$ corresponding to all neighbours $u\in N_H(v)$ of $v$ are independent and have the same probability, estimated above in~(\ref{Cuv-estimate}). 
Thus, denoting by 
\[
B_v = \bigl| \{u\in N_{H}(v): vu \text{ is always bad}\} \bigr|.
\]
we may use the Chernoff Bound to bound the probability of the following event:
\[
A_v: \quad B_v > 2\log_2\Delta.
\]
As by~(\ref{Cuv-estimate}), $\mathbb{E}(B_v)\leq 2^{2k}$, by~(\ref{CBR}) we obtain:
\begin{equation}\label{Av-bound}
\mathbb{P}\left(A_v\right)
\leq \exp\left( - \frac{(2\log_2\Delta - 2^{2k})^2}{(2\log_2\Delta - 2^{2k})+2^{2k+1}} \right)
\leq \Delta^{-2.5},
\end{equation}
for sufficiently large $\Delta$. 

Note that the bound on $\mathbb{P}(A_v)$ could alternatively be formalised using the law of total probability, conditioning on fixed values of $Z_v$ and  $X_{v,i',j'}$; however, as discussed, such a lengthy justification is unnecessary.

Finally, observe that the events $Q_{v,i}$ and $A_v$ depend only on random choices made for vertices at distance at most $1$ from $v$. Hence, each such event is mutually independent of all other events associated with vertices $u$ at distance greater than $2$ from $v$. For each vertex, there are $s$ events of type $Q_{v,i}$
and one event of type $A_v$. Thus, each such event is mutually independent of all but at most $\Delta^2(s+1)$ other events. 
By~(\ref{Qvi-bound}) and~(\ref{Av-bound}), the probability of each of these bad events is at most $\Delta^{-2.2}$. 
Thus, since $e \cdot \Delta^{-2.2} \cdot (\Delta^2(s+1)+1) < 1$ for sufficiently large $\Delta$, Lemma \ref{LLL} guarantees that with positive probability none of the events  $Q_{v,i}$ or $A_v$ occurs.

Consequently, there exist $sk$ vertex partitions $U= U_1^{(i,j)}\cup U_2^{(i,j)}\cup U_3^{(i,j)}$ for $i\in[s]$ and $j\in[k]$ such that $U_3^{(i,1)}=U_3^{(i,2)}=\ldots=U_3^{(i,k)} = U_3^{i}$ for every $i\in [s]$, 
where $U_3^{1}\cup U_3^{2}\cup\ldots\cup U_3^{s}$ is a partition of $U$, and:
\begin{description}
\item[$(1^\circ)$] every vertex $v\in U$ is joined by at least $\log_2\Delta$ edges with $U_3^i$ in $H_i$, for each $i\in[s]$;
\item[$(2^\circ)$] every vertex $v\in U$ is incident in $H$ with at most  $2\log_2\Delta$  edges $vu$ which are always in a bad configuration, across all the $sk$ partitions, i.e. such for which~(\ref{Cvu-definition}) holds.
\end{description}
We fix any collection of such $sk$ partitions.

\subsection{Random partition-based edge colouring}

To satisfy the vast majority of edges in $H$, we now describe the procedure for assigning colours to selected edges joining $U_2^{(i,j)}$ with $U_3^{(i,j)}$ in $H_i$, for $i\in [s]$ $j\in[k]$.
We process each subgraph $H_i$ independently, one by one. Initially no edges are coloured.

Fix a subgraph $H_i$, we iteratively process its $k$ partitions. In the $j$-th step, corresponding to the partition $U_1^{(i,j)} \cup U_2^{(i,j)} \cup U_3^{(i)}$,  we introduce a fresh, dedicated colour $c_{i,j}$. For every vertex $v \in U_2^{(i,j)}$, we arbitrarily pick one previously uncoloured edge $e$ joining $v$ with its neighbour $u \in U_3^{(i)}$ in $H_i$ and colour it with $c_{i,j}$. Such uncoloured edge $e$ always exists, as by  $(1^\circ)$, each vertex $v$ has at least $\log_2 \Delta$ neighbours in $U_3^{(i)}$, while at most $k-1$ of its incident edges in $H_i$ could have been coloured in the preceding steps. 

This procedure ensures that every edge $e\in E(H)$ which is in a good configuration in partition $j$ of the subgraph $H_i$ becomes instantly satisfied by the colour $c_{i,j}$, and remains so, since each step uses a distinct colour.

However, the coloured edges might not themselves be satisfied. 
To address this, we additionally colour certain edges within the sets $U_3^i$.
Fix $i\in [s]$. 
By~$(1^\circ)$, every vertex $v\in U_3^i$ has at least $\log_2\Delta$ neigbours in $U_3^i$ within $H_i$. 
Thus, each connected component of $H_i[U_3^i]$ is nontrivial, hence the same holds for arbitrarily chosen spanning forest $F_i$ of $H_i[U_3^i]$. 
We may therefore apply Corollary~\ref{4Bi16} to $F_i$ in order to colour its edges with $4$ fresh colours so that all newly coloured edges get satisfied and every vertex in $U_3^i$ becomes incident with a uniquely coloured edge in $H_i[U_3^i]$. 
Consequently, every edge coloured with $c_{i,j}$, for $j\in [k]$, becomes satisfied, since it has exactly one endpoint in $U_3^i$.

We repeat this procedure for each $i\in [s]$, using $4$ additional colours per set $U_3^i$.

Altogether, this yields a partial edge colouring of $H$ using at most
\begin{eqnarray}
sk+4s &\leq& (k+4) \left(\frac{\log_2\Delta}{k}+1\right) = \log_2\Delta + 4 \frac{\log_2\Delta}{k} +k+4\nonumber\\ 
&\leq& \log_2\Delta + \frac{4}{5}\varepsilon \log_2\Delta +k+4 \leq  \log_2\Delta + \frac{4}{5}\varepsilon \log_2\Delta + \frac{1}{20}\varepsilon \log_2\Delta \label{NoHCol}
\end{eqnarray} 
colours and all coloured edges are satisfied, as well as every edge of $H$ that is not always bad. 
Let $R$ be the subgraph of $H$ induced by the edges that remain unsatisfied at this stage. Since every such edge must be always bad, by $(2^\circ)$, we have
\begin{equation}\label{dRv}
d_R(v)\leq 2\log_2\Delta
\end{equation}
for every vertex $v\in U$.

\subsection{Satisfying the remaining edges}

Let $M_R$ be the matching consisting of all isolated edges of $R$, if any, and set $R':=R-M_R$.
Since $H$, $D$ and $F$ form an edge decomposition of $G$, every edge of $G$ that remains unsatisfied at this stage belongs to one of the graphs $R$, $D$ or $F$, and hence to one of $R'$, $D$ or $F+M_R$. 
We now colour edges in each of  the latter  three graphs separately so as to satisfy all of their edges.

By~(\ref{dRv}), we have $\chi(R')\leq\chi(R)\leq\Delta(R)+1\leq 2\log_2\Delta+1$. Thus, we may 
apply Theorem~\ref{3log} to partially colour the edges of $R'$ with at most
\begin{equation}\label{NoR'Col}
3\left\lceil\log_2(2\log_2\Delta+1)\right\rceil+16 \leq \frac{1}{20}\varepsilon \log_2\Delta
\end{equation} 
new colours, so that all non-isolated edges of $R'$ get satisfied. Since $R'$ has no isolated edges, all edges of $R'$ are now satisfied.

Similarly, by~(ii), the graph $D$ has no isolated edges and satisfies $\chi(D)\leq d$. 
Hence, by Theorem~\ref{3log}, we may partially colour the edges of $D$ using at most
\begin{equation}\label{NoDCol}
3\left\lceil\log_2 d \right\rceil+16 \leq \frac{1}{20}\varepsilon \log_2\Delta
\end{equation} 
previously unused colours so that all edges of $D$ are satisfied.

Finally, observe that by construction all vertices incident with edges in $M_R$ belong to $V(H)$. Since, by (iii), $F$ is a forest without trivial components and  $V (H) \subseteq V (F)$, we may apply Observation~\ref{16m} to the graph $F+M_R$. This allows us to partially colour its edges using at most 16 additional colours so that all edges of $F+M_R$ are satisfied.

At this point, all edges of $G$ are satisfied.
Assigning one additional fresh colour to all  yet uncoloured edges completes the construction. 
By~(\ref{NoHCol}), (\ref{NoR'Col}) and~(\ref{NoDCol}), the total number of colours used is at most
$$ \left(\log_2\Delta + \frac{4}{5}\varepsilon \log_2\Delta + \frac{1}{20}\varepsilon \log_2\Delta\right) + \frac{1}{20}\varepsilon \log_2\Delta+ \frac{1}{20}\varepsilon \log_2\Delta +16+1
\leq (1+\varepsilon)\log_2\Delta,$$
as required. \qed

\section{Lower bounds via random graphs and concluding remarks}
\label{SectionRandomComments}

By Theorem~\ref{GoodAsymptCFE-thm}, we immediately obtain the following.
\begin{corollary}\label{GoodAsymptCFE-cor}
For every graph $G$ with maximum degree $\Delta>1$,
$$
\chi'_{\rm ccf}(G)\leq (1+o(1))\log_2\Delta~~~{\rm and}~~~\chi'_{\rm ocf}(G)\leq (1+o(1))\log_2\Delta.
$$
\end{corollary}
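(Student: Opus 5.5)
The plan is to derive both inequalities directly from Theorem~\ref{GoodAsymptCFE-thm}, using the fact that every conflict-free colouring in the hybrid sense is simultaneously closed and open conflict-free. Concretely, suppose $c$ is an edge colouring in which every non-isolated edge $e$ is satisfied. Then there is an edge $e'\in E(e)$ whose colour appears nowhere else in $E[e]$. That colour therefore occurs exactly once in $E[e]$, and also exactly once in $E(e)\subseteq E[e]$. So $c$ is closed conflict-free and, for every $e$ with $E(e)\neq\emptyset$, also open conflict-free.

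Isolated edges need a separate check. For an isolated edge $e$ we have $E(e)=\emptyset$, so the open condition is vacuous. For the closed condition, $E[e]=\{e\}$, so the colour of $e$ is unique in its closed neighbourhood. Hence
\[
\chi'_{\rm ccf}(G)\le\chi'_{\rm cf}(G)
\quad\text{and}\quad
\chi'_{\rm ocf}(G)\le\chi'_{\rm cf}(G)
\]
for every graph $G$.

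It then remains to turn the $\varepsilon$--$\Delta_\varepsilon$ statement into an $o(1)$ statement valid for all $\Delta>1$. For each $\Delta$, define $f(\Delta)$ as the supremum, over all graphs $G$ with maximum degree $\Delta$, of $\chi'_{\rm cf}(G)/\log_2\Delta-1$.

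This quantity is finite for every fixed $\Delta>1$. Indeed, a proper edge colouring is trivially closed conflict-free. More to the point, Corollary~\ref{3logfull} together with $\chi(G)\le\Delta+1$ gives
\[
\chi'_{\rm cf}(G)\le 3\lceil\log_2(\Delta+1)\rceil+17,
\]
and $\log_2\Delta>0$ since $\Delta\ge 2$. Theorem~\ref{GoodAsymptCFE-thm} then says exactly that $\limsup_{\Delta\to\infty} f(\Delta)\le\varepsilon$ for every $\varepsilon>0$, so $f(\Delta)=o(1)$. Combining this with the two inequalities above yields both bounds of the corollary.

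There is no real obstacle here. The only points needing care are the vacuous handling of isolated edges, and the requirement that small values of $\Delta$ (below $\Delta_\varepsilon$) be absorbed into the $o(1)$ term. The uniform finite bound from Corollary~\ref{3logfull} handles the latter.
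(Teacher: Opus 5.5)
Your proposal is correct and follows the paper's route: the paper derives the corollary immediately from Theorem~\ref{GoodAsymptCFE-thm} via the observation that $\chi'_{\rm ccf}(G)$ and $\chi'_{\rm ocf}(G)$ are bounded above by $\chi'_{\rm cf}(G)$. You also spell out details the paper leaves implicit, namely the vacuous isolated-edge case and absorbing $\Delta<\Delta_\varepsilon$ into the $o(1)$ term via the uniform bound from Corollary~\ref{3logfull}, and both are handled correctly.
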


In this section, we show that $(1-o(1))\log_2\Delta$ colours are typically necessary for conflict-free edge colourings, both in the closed and open settings. To this end, we consider the standard binomial random graph
$\mathbb{G}_{n,p}$, where $0\leq p\leq 1$ may depend on $n$, see e.g.~\cite{gnp}.

We first show that relatively large number of colours is required with high probability whenever $p\gg (\log_2n)^2/n$, that is, when $pn/(\log_2n)^2 \rightarrow \infty$ as $n \rightarrow \infty$.
The second part of the argument extends approaches used in~\cite{DebskiPrzybylo} and~\cite{KPreg2}  for complete graphs. We discuss the implications of the theorem after its proof.

\begin{theorem}\label{RandomGraphsTech}
If $p\gg (\log_2n)^2/n$, then a.a.s.
$$\chi'_{\rm ccf}(\mathbb{G}_{n,p})\geq \log_2(pn)-2\log_2\log_2n~~~{\rm and}~~~
\chi'_{\rm ocf}(\mathbb{G}_{n,p})\geq \log_2(pn)-2\log_2\log_2n.$$
\end{theorem}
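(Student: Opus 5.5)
The plan is a pigeonhole argument on colour sets at vertices, combined with a density property of $\mathbb{G}_{n,p}$ that holds a.a.s. Fix any edge colouring $c$ of $G=\mathbb{G}_{n,p}$ with $k$ colours that is closed conflict-free, or open conflict-free. For a vertex $v$ let $A(v)$ be the set of colours appearing on $E(v)$. There are at most $2^k$ possible values of $A(v)$. Hence some set $S$ of at least $n/2^k$ vertices all share the same set $A(v)$.

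\textbf{Structural claim.} The key step is to show that $G[S]$ has maximum degree at most $2k$.

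Take an edge $uv$ with $u,v\in S$. Any colour $\alpha\neq c(uv)$ that appears in $E[uv]$ lies in $A(u)=A(v)$. So it appears on some edge at $u$ and on some edge at $v$. These are two distinct edges of $E(uv)$, since neither equals $uv$. Therefore $\alpha$ is not unique in either setting, and the only candidate for a unique colour is $c(uv)$.

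In the closed setting, $c(uv)$ must occur in $E[uv]$ only on $uv$ itself. So $c(uv)$ has multiplicity exactly $1$ at both $u$ and $v$. In the open setting, $c(uv)$ must occur exactly once in $E(uv)=E[uv]\setminus\{uv\}$. So it has multiplicity $2$ at one endpoint and $1$ at the other. In both cases, at each endpoint $w$ of an edge of $G[S]$, the colour $c(uv)$ has multiplicity at most $2$ at $w$.

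Each colour of multiplicity at most $2$ at $w$ accounts for at most two edges at $w$. Hence $d_{G[S]}(w)\le 2k$ (and even $\le k$ in the closed case). This part is deterministic and easy.

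\textbf{Random graph property.} It remains to show that a.a.s. every set $S$ with $|S|=s\ge s_0:=\lceil (\log_2 n)^2/p\rceil$ spans more than $ps^2/4$ edges. Such a set then has a vertex of degree at least $ps/2\ge (\log_2 n)^2/2$ in $G[S]$.

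For a fixed $S$ the edge count is binomial with mean $p\binom{s}{2}$. The lower Chernoff bound (\ref{CBL}) gives failure probability $\exp(-\Omega(ps^2))$. The union bound over the at most $\binom{n}{s}\le \exp(s\ln n)$ sets of size $s$, summed over $s\ge s_0$, is then $o(1)$ provided $ps\gg\ln n$. This holds because $ps\ge(\log_2 n)^2$.

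The condition $p\gg(\log_2 n)^2/n$ is exactly what guarantees $s_0\le n$, i.e.\ that the range of $s$ is non-empty. I expect this union-bound estimate, together with getting the thresholds consistent, to be the main point requiring care. One could also extract a subset of size exactly $s_0$ from $S$ to avoid summing over $s$.

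\textbf{Conclusion.} Suppose $k<\log_2(pn)-2\log_2\log_2 n$. Then
$$|S|\ge \frac{n}{2^k}>\frac{n(\log_2 n)^2}{pn}=\frac{(\log_2 n)^2}{p},$$
so $|S|\ge s_0$. On the a.a.s.\ event above, $G[S]$ then has a vertex of degree at least $(\log_2 n)^2/2$. This exceeds $2k=O(\log n)$ for large $n$, contradicting the structural claim. Hence a.a.s.\ every closed and every open conflict-free edge colouring of $\mathbb{G}_{n,p}$ uses at least $\log_2(pn)-2\log_2\log_2 n$ colours, which is the statement of Theorem~\ref{RandomGraphsTech}.
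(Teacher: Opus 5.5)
Your proof is correct and follows the paper's argument. You use the same three ingredients: pigeonhole on the palettes $A(v)$ to obtain a set $S$ of roughly $(\log_2 n)^2/p$ vertices with a common palette, a Chernoff-plus-union-bound density property for all such vertex sets, and the observation that for an edge $uv$ inside $S$ only $c(uv)$ can be unique. The only difference is how the final contradiction is packaged: the paper takes a colour class of at least $2s$ edges in $E[S]$, which forces a vertex with three edges of that colour, while you bound $\Delta(G[S])\le 2k$ and compare it with a vertex of degree at least $(\log_2 n)^2/2$. This is the same pigeonhole in a different form.
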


\begin{pf}
We shall show that w.h.p., $k=\log_2n-\log_2p^{-1}-2\log_2\log_2n$ colours do not suffice in either setting. 
For convenience we omit ceiling and floor signs, to simplify presentation of the idea. Set 
$$s:=p^{-1}(\log_2n)^2$$ 
and assume that $[n]$ is the vertex set of $\mathbb{G}_{n,p}$. 
We first observe that w.h.p. 
\begin{equation}\label{Sspans}
\forall S\subseteq [n], |S|=s:~ S~{\rm spans~ at~least}~2p^{-1}(\log_2n)^3~{\rm edges}. 
\end{equation}
Indeed, for any fixed $S\subseteq [n]$ with $|S|=s$, the expected number of edges it spans is ${s \choose 2}p \geq 0.1p^{-1}(\log_2n)^4$, where we assume that $n$ is large enough, whenever needed. Thus, by the Chernoff Bound, the probability that $S$ spans less than $2p^{-1}(\log_2n)^3$ edges is at most
$$\exp\left(-\frac{\left(0.1p^{-1}(\log_2n)^4-2p^{-1}(\log_2n)^3\right)^2}{2\cdot 0.1p^{-1}(\log_2n)^4}\right) 
\leq n^{-2s}.$$
Hence, by the union bound, the probability that~(\ref{Sspans}) does not hold is bounded above by
${n \choose s}n^{-2s}\leq n^{-s}=o(1)$, as claimed.

Now fix a graph $G = ([n], E)$ satisfying~(\ref{Sspans}), and consider any edge colouring $c$ of $G$ with $k$ colours. For each vertex $v$, let $P_v$ denote its \emph{palette}, i.e., the set of colours appearing on the edges incident with $v$. Since at most $2^k=np/(\log_2n)^2=n/s$ distinct pallets may occur in $G$, at least one palette, say $\mathcal{P}$, is assigned to at least $s$ vertices; let $S$ be any subset of exactly $s$ such vertices.

By ~(\ref{Sspans}), the set $S$ spans at least $2p^{-1}(\log_2 n)^3 \geq 2ks$ edges. Hence, there exists a set $F \subseteq E[S]$ of at least $2s$ edges all coloured with the same colour $\alpha$. It follows that there is an edge $uv \in F$ such that $d_F(v) \geq 3$, since otherwise all degrees in $F$ would be at most $2$.
Thus, the colour $\alpha$ of $uv$ appears at least twice in both $E(uv)$ and $E[uv]$. 
However, any colour in $\mathcal{P}\smallsetminus\{\alpha\}$ must appear at least twice in $E(uv)\subseteq E[uv]$ as well, since $P_u=\mathcal{P}=P_v$.
Consequently, no colour is unique in either the open or closed neighbourhood of the edge $uv$, and hence $c$ is neither a closed nor an open conflict-free colouring.
\qed
\end{pf}

It is well known that w.h.p., $\Delta(\mathbb{G}_{n,p})\sim np$ for $p\gg \log_2n/n$; see e.g.~\cite{gnp}.
Therefore, Theorem~\ref{RandomGraphsTech} implies that a.a.s., 
$$\chi'_{\rm ccf}(\mathbb{G}_{n,p})\geq (1-o(1)) \log_2\Delta(\mathbb{G}_{n,p})~~~ {\rm and}~~~ \chi'_{\rm ocf}(\mathbb{G}_{n,p})\geq (1-o(1)) \log_2\Delta(\mathbb{G}_{n,p})$$
whenever $pn$ is a superpolylogarithmic function in $n$, i.e., when $p=(\ln_2n)^\omega/n$ where $\omega$ is any function such that $\omega\rightarrow\infty$ arbitrarily slowly with $n$. 
In particular, this holds for $p=n^{\varepsilon-1}$, where $0<\varepsilon\leq 1$ is any fixed constant. 

Moreover, for every $p \geq (\log_2 n)^{2+\gamma}/n$, where $\gamma > 0$ is any constant, Theorem~\ref{RandomGraphsTech} along with Corollary~\ref{GoodAsymptCFE-cor} imply that both parameters are $\Theta(\log_2 \Delta(G_{n,p}))$ with high probability.\\

Therefore, the bounds from Corollary~\ref{GoodAsymptCFE-cor} are asymptotically optimal in several respects.
In particular, they are matched from below by broad families
arising from the analysis of random graphs, and hence by asymptotically almost all graphs. 
Establishing these upper bounds required handling low-degree vertices, which introduce significant technical complications and lead to increase of the corresponding \emph{vertex} parameters, particularly in the open setting. Perhaps surprisingly, accounting for these difficulties did not affect the asymptotic order of the obtained upper bounds for the edge setting.

In view of our asymptotically precise solutions, it is natural to ask whether analogous or even stronger results can be obtained with a purely deterministic approach.
For example, one could aim to improve the results in Corollary~\ref{3logfull}, in particular with respect to the chromatic number $\chi(G)$, rather than the maximum degree of a graph.

Finally, recall that the conflict-free chromatic index of a graph can be interpreted as the conflict-free chromatic number of its line graph. From this perspective, our research may be viewed as studying the latter parameter within the class of line graphs, which exhibit a certain sparsity structure. 
This points towards a natural direction for further research, namely to investigate to what extent our results for $\chi_{\rm ccf}(G)$ and $\chi_{\rm ocf}(G)$ extend to more general sparse graph classes, such as e.g. $K_{1,r}$-free graphs, and to determine when  the relevant parameters remain closer to $\log \Delta$ rather than $(\log \Delta)^2$.


\begin{thebibliography}{99}


\bibitem{Abel2018}
Z.~Abel, V.~Alvarez, E.~D.~Demaine, S.~P.~Fekete, A.~Gour, A.~Hesterberg, P.~Keldenich, and C.~Scheffer,
Conflict-free coloring of graphs,
SIAM Journal on Discrete Mathematics, 32(4) (2018) 2675--2702.

%\bibitem{LLL} N. Alon, J.H. Spencer, The Probabilistic Method, second ed., Wiley, New York, 2000.

\bibitem{LLL} N. Alon, J.H. Spencer, The Probabilistic Method, second ed., Wiley, New York, 2000.

%\bibitem{Berge}
%C. Berge, Two theorems in graph theory, Proceedings of the National Academy of Sciences of the United States of America 43(9) (1957) 842--844.

\bibitem{Hindusi} 
Sriram Bhyravarapu, Subrahmanyam Kalyanasundaram, Rogers
Mathew,
A short note on conflict-free coloring on closed neighborhoods of bounded degree graphs,
J. Graph Theory 97(4) (2021) 553--556.

\bibitem{Hindusi3}
Sriram Bhyravarapu, Subrahmanyam Kalyanasundaram, Rogers
Mathew, Conflict-Free Coloring Bounds on Open Neighborhoods,
Algorithmica 84(8) (2022) 2154--2185.

\bibitem{Hindusi2} 
Sriram Bhyravarapu, Subrahmanyam Kalyanasundaram, Rogers
Mathew, Conflict-free coloring on claw-free graphs and interval graphs.
In 47th International Symposium on Mathematical Foundations of Computer
Science, MFCS 2022, August 22-26, 2022, Vienna, Austria, volume
241 of LIPIcs, pages 19:1--19:14. Schloss Dagstuhl - Leibniz-Zentrum
f\"ur Informatik, 2022. https://doi.org/10.4230/LIPIcs.MFCS.2022.19
doi:10.4230/LIPIcs.MFCS.2022.19.

\bibitem{BollobasKCore}
B. Bollob\'as, The evolution of sparse graphs, in Graph Theory and Combinatorics., Proc. Cambridge Combinatorial Conf. in honor of Paul Erd\H{o}s., Academic Press, 1984, 35--57.

\bibitem{TreesCF}
Y. Caro, M. Petruševski, R. Škrekovski,
Every tree is proper conflict-free (degree+1)-choosable,
Discrete Mathematics 345(1) (2022) 112640.

\bibitem{OuterplanarCF}
Y. Caro, M. Petruševski, R. Škrekovski,
Proper conflict-free degree-choosability of outerplanar graphs,
Applied Mathematics and Computation 449 (2023) 128045.

\bibitem{CPS} Y. Caro, M. Petru\v{s}evski, R. \v{S}krekovski, Remarks on proper conflict-free colorings of graphs, Discrete Math. 346(2) (2023) 113221.

\bibitem{EUN} E. Cho, I. Choi, H. Kwon, B. Park, Proper conflict-free coloring of sparse graphs,	arXiv:2203.16390, 2022.

\bibitem{EUN2} E. Cho, I. Choi, H. Kwon, B. Park, 
Brooks-type theorems for relaxations of square colorings, %	arXiv:2302.06125, 2023.
Discrete Mathematics 348 (2025) 114233.

%\bibitem{hCFBounds}
%E. Cho, I. Choi, H. Kwon, B. Park,
%New bounds for proper $h$-conflict-free colorings,
%arXiv:2309.02458, 2023.

\bibitem{CF-Pirot}
Q. Chuet, T. Dai, Q. Ouyang, F. Pirot,
New bounds for proper $h$-conflict-free colourings,
%arXiv:2505.04543, 2025.
Random Structures Algorithms 68 (2026) e70054.

\bibitem{DWC-CHL}
D.W. Cranston, C.-H. Liu, 	Proper Conflict-free Coloring of Graphs with Large Maximum Degree, 
%	arXiv:2211.02818, 2022.
SIAM Journal on Discrete Mathematics 38 (2024) 10.1137/23M1563281.

\bibitem{deWerra}
D. de Werra, How to color a graph: a survey,
in Combinatorial Programming: Methods and Applications
Proceedings of the NATO Advanced Study Institute held at the Palais des Congr\'es, Versailles, France,  September, 1974, 305--325. 

\bibitem{DebskiPrzybylo} 
M. D\k{e}bski, J. Przyby{\l}o, Conflict-free chromatic number vs conflict-free chromatic index, J. Graph Theory 99(3) (2022) 349--358. https://doi.org/10.1002/jgt.22743.

\bibitem{EvenEtAl} G. Even, Z. Lotker, D. Ron, S. Smorodinsky, Conflict-free colorings of simple geometric regions with applications to frequency assignment in cellular networks, Siam. J. Comput. 33 2003 94--136.

\bibitem{Fab} I. Fabrici, B. Lu\v{z}ar, S. Rindo\v{s}ov\'{a}, R. Sot\'{a}k, Proper conflict-free and unique-maximum colorings of planar graphs with respect to neighborhoods, 
Discrete Appl. Math. 324 (2023) 80--92.

\bibitem{gnp}
A. Frieze and M. Karo\'nski, Introduction to Random Graphs. Cambridge: Cambridge University Press, 2015.

\bibitem{GlebovEtAl} R. Glebov, T. Szab\'o, G. Tardos, Conflict-free coloring of graphs, Combinatorics, Probability and Computing, 23(3) (2014) 434--448.

\bibitem{GuoTrees24}
S. Guo, E.Y.H. Li, L. Li, P. Li, Conflict-free chromatic index of trees, %arXiv:2409.10899, 2024.
Theoretical Computer Science 1058 (2025) 115587.

\bibitem{CFChoosabilityHardness}
S. Gupta, R. Mathew,
Bounds and hardness results for conflict-free choosability,
arXiv:2409.12672,

%\bibitem{Hall}
%P. Hall, On Representatives of Subsets, J. London Math. Soc. 10(1)  (1935) 26--30,
% doi:10.1112/jlms/s1-10.37.26.
%
%\bibitem{RH}
%R. Hickingbotham, Odd colorings, Conflict-Free colorings and Strong coloring Numbers, 
%Australas. J. Combin. 
%87(1) (2023) 160--164.

\bibitem{RandOm} S. Janson, T. {\L}uczak, A. Ruci\'nski, Random Graphs, Wiley, New York, 2000.

\bibitem{MPK}
M. Kamyczura, M. Meszka, J. Przyby{\l}o, A note on the conflict-free chromatic index, Discrete Math. 347(4)  (2024) 113897. 

\bibitem{KPreg}
M. Kamyczura, J. Przyby{\l}o, On asymptotically tight bound for the conflict-free chromatic
index of nearly regular graphs, Discrete Math. 349 (2026) 114945.

\bibitem{KPreg2}
M. Kamyczura, J. Przyby{\l}o, 
On asymptotically tight bounds for the open conflict-free chromatic indexes of nearly regular graphs,
https:2601.17827, 2026.

\bibitem{KP-proper}
M. Kamyczura, J. Przyby{\l}o, On conflict-free proper colorings of graphs without small degree vertices, Discrete Math. 347(1) (2024) 113712.

%\bibitem{KellerEtAl} C. Keller, S. Smorodinsky, Conflict-Free Coloring of Intersection Graphs of Geometric Objects, Discrete Comput Geom, 2019.

%\bibitem{ExtremalCF}
%C. Keller, S. Smorodinsky,
%Extremal results on conflict-free coloring,
%Discrete Geometry and Optimization, Springer (2019) 331–389.

\bibitem{KostochkaEtAl} A. Kostochka, M. Kumbhat and T. {\L}uczak, Conflict-Free colorings of Uniform Hypergraphs With Few Edges, Combinatorics, Probability and Computing 21(4) (2012) 611--622. 

\bibitem{CHL} C.-H. Liu, Proper conflict-free list-coloring, odd minors, 
and layered treewidth, Discrete Math. 347(1) (2024) 113668.

\bibitem{Liu-Reed}
C.-H. Liu and B. Reed. Asymptotically optimal proper conflict-free colouring, Random Structures \& Algorithms
66(3) (2025) e21285, .

\bibitem{Liu-Reed_2}
C.-H. Liu, B. Reed,
Peaceful Colourings,
arXiv:2402.09762, 2024.

\bibitem{LuczakKCore}
T. {\L}uczak, Size and connectivity of the k-core of a random graph, Discrete Mathematics 91(1) (1991) 61--68.

\bibitem{ChernoffBook}
M. Mitzenmacher, E. Upfal, Probability and Computing: Randomized Algorithms and Probabilistic Analysis. Cambridge: Cambridge University Press, 2005.

%\bibitem{MolloyTalagrandRef} M. Molloy and B. Reed, Colouring graphs when the number of colours is almost the maximum degree, J. Combin. Theory Ser. B 109 (2014) 134--195.

\bibitem{PachTardos} J. Pach and G. Tardos, Conflict-free colorings of graphs and hypergraphs, Combinatorics, Probability and Computing 18(5) (2009) 819 -- 834.

\bibitem{PP}
P. P\k{e}ka{\l}a, J. Przyby{\l}o,
On list extensions of the majority edge-colourings of graphs,
The Electronic Journal of Combinatorics 32(4) (2025), Article P4.38,
doi:10.37236/13882.

\bibitem{SmorodinskyPhd} S. Smorodinsky, Combinatorial Problems in Computational Geometry. PhD thesis, School of Computer Science, Tel-Aviv University, 2003.

\bibitem{SmorodinskyApplications} S. Smorodinsky, Conflict-free coloring and its applications. In Geometry--Intuitive, Discrete, and Convex, Springer, 331--389, 2013.

\bibitem{DATA}
Data sharing not applicable to this article as no datasets were generated or analysed during the current study





 \end{thebibliography}
\end{document}